\documentclass[9pt,twoside,english,reqno,a4paper]{amsart}

\usepackage{listings,graphicx,amsmath,varioref,amscd,amssymb,color,bm,stmaryrd,amsthm,amsfonts,graphics,geometry,latexsym,pgf,pst-all} 
\setlength{\parindent}{0pt}
\theoremstyle{plain}
\usepackage{esint}
\usepackage{amsthm}

\usepackage{enumerate}

\pretolerance=10000

\theoremstyle{plain}
\newtheorem{theorem}{Theorem}[section]
\newtheorem{proposition}[theorem]{Proposition}
\newtheorem{Proposition}[theorem]{Proposition}
\newtheorem{lemma}[theorem]{Lemma}

\usepackage{geometry}
\geometry{
	a4paper,
	left=27mm,
	right=27mm,
	top=30mm,
	bottom=30mm,
}

\usepackage{cancel}
\usepackage[colorlinks=false]{hyperref}

\theoremstyle{definition}
\newtheorem{defin}[theorem]{Definition}

\newtheorem{remark}[theorem]{Remark}
\newtheorem{example}{Example}

\theoremstyle{remark}

\usepackage{fouriernc}
\usepackage[T1]{fontenc}

\font\manual=manfnt
\newcommand\xqed[1]{%
  \leavevmode\unskip\penalty9999 \hbox{}\nobreak\hfill
  \quad\hbox{#1}}
\newcommand\triang{\xqed{\manual\char'170}}

\def\bk{\color{black}}

\numberwithin{equation}{section}

\def\dis{\displaystyle}

\DeclareMathOperator{\R}{\mathbb{R}}
\DeclareMathOperator{\N}{\mathbb{N}}

\newcommand{\car}[1]{\raise1pt\hbox{$\chi$}_{#1}}

\newcommand{\DM }{\mathcal{DM}^\infty }
\def\rn{\mathbb{R}^N}

\def\re{\mathbb{R}}
\newcommand{\res}{\!\!\mathop{\hbox{
			\vrule height 7pt width .5pt depth 0pt
			\vrule height .5pt width 6pt depth 0pt}}
	\nolimits}

\begin{document}
\title[Global $BV$ regularity for 1-Laplace type problems with singular lower order terms]{Optimal global $BV$ regularity for 1-Laplace type BVP's with singular lower order terms}

\author[A. J. Mart\'inez Aparicio]{Antonio J. Mart\'inez Aparicio}
\author[F. Oliva]{Francescantonio Oliva}
\author[F. Petitta]{Francesco Petitta}

\address[Antonio J. Mart\'inez Aparicio]{Departamento de Matem\'aticas,
Universidad de Almer\'ia
	\hfill \break\indent
    Ctra. Sacramento s/n, La Ca\^{n}ada de San Urbano, 04120 Almer\'ia, Spain}
\email{\tt ajmaparicio@ual.es}

\address[Francescantonio Oliva]{
Dipartimento di Scienze di Base e Applicate per l' Ingegneria, Sapienza Universit\`a di Roma
	\hfill \break\indent
	Via Scarpa 16, 00161 Roma, Italy}
\email{\tt francescantonio.oliva@uniroma1.it}

\address[Francesco Petitta ]{Dipartimento di Scienze di Base e Applicate per l' Ingegneria, Sapienza Universit\`a di Roma
		\hfill \break\indent
		Via Scarpa 16, 00161 Roma, Italy}
\email{\tt francesco.petitta@uniroma1.it}

\makeatletter
\@namedef{subjclassname@2020}{%
  \textup{2020} Mathematics Subject Classification}
\makeatother

\begin{abstract}
 In this paper we  provide a complete characterization  of the regularity properties of the solutions associated to the homogeneous Dirichlet problem
\begin{equation*}
    \begin{cases}
    \displaystyle - \Delta_1 u= h(u)f &  \text{in } \Omega, \\
    u=0 & \text{on } \partial \Omega,		
    \end{cases}
\end{equation*}
where $\Omega\subset\mathbb{R}^N$ is a bounded open set with Lipschitz boundary,  $f \in L^m(\Omega)$ with $m\geq 1$ is a nonnegative function and $h\colon \mathbb{R}^+ \to \mathbb{R}^+$ is continuous, possibly singular at the origin and bounded at infinity.

 Without any growth restrictions on $h$ at zero, we prove existence of global finite energy solutions in $BV(\Omega)$ under sharp conditions on the summability of $f$ and on the behaviour of $h$ at infinity. Roughly speaking, the faster $h$ goes to zero at infinity, the less regularity is required on $f$. In contrast to the $p$-Laplacian case ($p>1$),  we show that the behaviour of $h$ at the origin plays essentially no role.

The main result contains an extension of the celebrated one of Lazer-McKenna (\cite{lm}) to the case of the $1$-Laplacian as principal operator. 
\end{abstract}

\keywords{1-Laplacian, p-Laplacian, Nonlinear elliptic equations, Singular elliptic equations, Finite energy solutions} \subjclass{35J60, 35J75, 35B09, 35B65, 35R99}

\maketitle
\tableofcontents

\section{Introduction}

Over the last few decades,  many mathematicians have focused their attention to singular boundary value problems, whose simplest model is given by 
\begin{equation}
	\begin{cases}
 - \Delta u= f(x){u^{-\gamma}} \ &\text{in}\, \ \Omega,
 		\\
 		u\ge 0 \  &\text{in}\,\ \Omega,
		\\
 		u=0 \  &\text{on}\,\ \partial\Omega,
\label{pbintro}
	\end{cases}
\end{equation}
where $\Omega\subset\mathbb{R}^N (N>1)$ is a smooth bounded  domain, $\gamma>0$ and $f$ is a nonnegative function.   In this context, two issues have been intensely addressed in particular\bk. The first, for obvious reasons, is the existence of solution. The second, also due to  its interest in physics, is whether such solutions have finite energy. When dealing with the Laplacian operator, the finite energy space is  $H_0^1(\Omega)$. \bk
\medskip

 Regarding the existence of a solution, when $f\in C^{\alpha}(\overline{\Omega})$ (for some $0<\alpha<1$)  is bounded away from zero on $\overline{\Omega}$, the authors in~\cite{crt} proved both existence and uniqueness of a classical solution to~\eqref{pbintro}. Then, in the seminal paper~\cite{lm},  A.C. Lazer and P.J. McKenna showed that $u\notin C^{1}(\overline{\Omega})$ if $\gamma> 1$ while $u\in H^{1}_0(\Omega)$ if and only if $\gamma<3$.
\medskip

In the general context of distributional solutions  a pioneering work was~\cite{bo}, where the authors studied the existence of solutions to~\eqref{pbintro} for a nonnegative $f$ just belonging to $L^{m}(\Omega)$. \bk In particular, such solutions are shown to satisfy the following regularity properties: 
 	\begin{itemize}		
		\item[i)] if $\gamma<1$ and $m= \left(\frac{2^*}{1-\gamma}\right)'$ then $u \in H^1_0(\Omega)$, otherwise if $1 \le m <\left(\frac{2^*}{1-\gamma}\right)'$ then $u \in W^{1,\frac{Nm(\gamma+1)}{N-m(1-\gamma)}}_0(\Omega)$;
		\item[ii)] if $\gamma=1$  and $m = 1$ \bk then $u \in H^1_0(\Omega)$;
		\item[iii)] if $\gamma>1$  and $m = 1$ \bk then $u \in H^1_{\rm{loc}}(\Omega)$.
	\end{itemize}
	
In the latter case (i.e. \text{iii)}), the boundary datum is meant in a weak sense, namely
\begin{equation}\label{potenza}
u^{\frac{\gamma+1}{2}} 	\in H^1_0(\Omega).	
\end{equation}
Let us underline that the previous request is not only technical as shown in Example~\ref{ex1} below; in fact,  for $\gamma>1$, \bk there exist distributional solutions to \eqref{pbintro} not belonging to $H^1_0(\Omega)$ but satisfying~\eqref{potenza}. We also mention that a different weak way to infer the homogeneous boundary datum for a nonnegative function $u$ consists in requiring $(u-k)^+=0$ on $\partial \Omega$ for any $k>0$ (see \cite{cd,cs}) provided  $(u-k)^+$ has a trace. 
\medskip

In this frame, the Lazer-McKenna threshold for the existence of finite energy solutions when $\gamma>1$ was extended to the case of a general $f\in L^{m}(\Omega)$: problem~\eqref{pbintro} has a solution (which is unique) in $H_0^1(\Omega)$ for any  positive \bk $f\in L^{m}(\Omega)$ ($m> 1$) if and only if $\gamma<{3-\frac{2}{m}}$ (see \cite{OP}).
\medskip

Some \bk  of the results discussed above have been suitably extended  to the case of nonlinear operators. In order to be concrete, let us consider the following model problem: 
\begin{equation} \label{op}
    \begin{cases}
    \displaystyle - \Delta_p u= f(x)u^{-\gamma} &  \text{in}\, \ \Omega, \\
    u\geq 0& \text{in}\, \ \Omega,\\
    u=0 & \text{on}\, \ \partial \Omega\,,		
    \end{cases}
\end{equation}

where  $\Delta_p u = \operatorname{div}(|\nabla u|^{p-2} \nabla u)$ is the usual $p$-Laplace operator for $p>1$. 
\medskip
 
One could wonder when problem \eqref{op} admits solutions $u$ with global finite energy, i.e. whether \bk $u\in W^{1,p}_0(\Omega)$. In this case, at least for smooth positive data, the Lazer-McKenna threshold in order to get finite energy solutions becomes (\cite{S}) 
\begin{equation}\label{condLMp}
 \gamma<\frac{2p-1}{p-1}.
 \end{equation}
 
More in general, if $f$ is an $L^m(\Omega)$ function with enough summability, in \cite{DCA} (see also \cite{DDO}) was shown the existence of distributional solutions $u\in W^{1,p}_{\rm loc}(\Omega)$ such that 
\begin{equation} \label{formally}
    u^{ \max\left(1,\frac{\gamma-1+p}{p}\right)}\in W^{1,p}_0(\Omega),
\end{equation}
so that, if $\gamma>1$, coherently with \cite{bo} in case $p=2$, only a power of the solution is shown to have finite energy.  Again, as for the Laplacian case, we stress that when $\gamma>1$ the solution to~\eqref{op} may not belong to the finite energy space $W_0^{1,p}(\Omega)$ (see Example~\ref{ex1}). For a more in-depth discussion of singular problems of this type, we refer the reader to the recent survey \cite{opSU}.
\medskip

Now let us focus our attention on problems as \eqref{op} with $p=1$, where the operator becomes $-\Delta_1 u = -\operatorname{div}(|D u|^{-1}D u)$. Here, the finite energy space turns out to be $BV(\Omega)$, the space of functions with bounded total variation over $\Omega$. A major difficulty arises in giving a meaning to the quotient $|D u|^{-1}D u$, since $Du$ is merely a measure. To address this, a suitable notion of solution was given in~\cite{ABCM}, where the authors introduce a vector field $z\in L^\infty(\Omega)^N$ that plays the role of this quotient. This vector field is linked to $u$ through the relation $(z,Du)=|Du|$, where the generalized dot product $(z,Du)$ is defined via the $L^\infty$-divergence-measure vector fields theory (\cite{An}, \cite{CF}). In \cite{DGOP}, approximating through solutions $u_p$ to \eqref{op} with $p>1$, it is shown the existence and (provided $f>0$) uniqueness  of nonnegative bounded solutions to
\begin{equation}\label{now}
    \begin{cases}
        \displaystyle - \Delta_1 u= f(x){u^{-\gamma}} &  \text{in} \ \Omega, \\
 		u=0 & \text{on}\ \partial \Omega,
 		\,
    \end{cases}
\end{equation}
where $0\le f\in L^{N}(\Omega)$. Let us stress that the summability exponent $N$  represents the well known "Pillars of Hercules" for problems arising from Lagrangian with linear growth as 
$$
F(v)=\int_\Omega |D  v|-\int_\Omega f v \, dx,  
$$
(see for instance \cite{ACM,ct,mst,OCP} and references therein); beyond this threshold substantial degenerate and singular phenomena appear (see   \cite{MST2009}) unless some regularizing lower order terms are considered (see for instance \cite{o,lops,bop} \bk and the discussion in  Section \ref{sec6} below). 
\medskip
 
Let us precise that, as for the case $p>1$ with sufficiently integrable data, if $\gamma\leq 1$ (see \cite{DGS}) and $f\in L^{N}(\Omega)$ is nonnegative, then finite energy solutions to problem \eqref{now} always exist. \bk
\medskip

 At first glance,  if $\gamma>1$ the situation seems to be in continuity to the one of the $p$-Laplacian in~\eqref{formally}; \bk in order to better understand this phenomenon  one can formally multiply the equation in \eqref{now} by $u^{\gamma}$ yielding to an estimate of the form 
$$
\int_{\Omega} |D u^{\gamma}| + \int_{\partial\Omega} u^{\gamma} \, d\mathcal{H}^{N-1}\leq \int f \, dx  ,
$$
namely $u^{\gamma}\in BV(\Omega)$. This was rigorously shown in \cite{DGOP} by sending $p\to 1^+$ into \eqref{op} and it is clearly consistent with \eqref{formally}, as for the case $p>1$. 
\medskip

 Asking for  $f\in L^N(\Omega)$  allows the authors to prove the existence of nonnegative bounded solutions to~\eqref{now}. Also, when $f$ merely belongs to $L^m(\Omega)$, $1\leq m<N$, a priori, solutions may be unbounded and, even more so, they  may not belong to  $BV(\Omega)$\bk.  Although, a key observation in this more general setting is that problem~\eqref{now} always has a solution $u$ such that $T_k(u)^{\max (1,\gamma)}\in BV(\Omega)$ for every $k>0$, where $T_k$ denotes the truncation function (see  \cite{lops}).

\medskip
Inspired by \cite{lm}, the  relevant question  addressed here is \bk under which  minimal conditions  solutions $u$ to \eqref{now} have global  finite energy, i.e. whether \bk  $u\in BV(\Omega)$. \bk  In order to have some insight, let us  come back for a moment to \eqref{condLMp}; as $p\to 1^+$ it suggests  that one  could expect to obtain a global  $BV$  solution to \bk\eqref{now} for any $\gamma>0$ at least for both a non-degenerate bounded  datum and a smooth domain.  
\medskip

Let us  also observe that this suggestion could be strengthen by the following argument: for $p>1$ infinite energy solutions can be constructed  due to the behaviour of the approximating solutions $u_p$  near the boundary of $\Omega$ as 
$$
u_p \sim d(x,\partial\Omega)^{\frac{p}{p+\gamma-1}}.
$$
Then, as $\gamma$ grows, around any boundary point of  $\Omega$, \bk the solutions $u_p$ of \eqref{op} become steepest and steepest until they cease to belong to $W^{1,p} (\Omega)$ as $\gamma$ reaches $\frac{2p-1}{p-1}$.  
Even if thought \bk as limit of such $u_p$'s,  the guesswork that the limit  solution $u$ of \eqref{now} should have finite energy can also be  fuelled by the fact that a $BV$ function allows jumps, for instance at the boundary, and the fact that  zero trace in \eqref{now}  needs not to be  attained in a classical sense of traces (see for instance \cite{ACM, DGOP}, and references therein). This is the case, for example, of constant solutions. 
\medskip

At least for $\Omega$ smooth, the idea of having nonnegative solutions to~\eqref{now} belonging to $BV(\Omega)$ for any $\gamma>0$ may be reinforced by looking at the model case 
\begin{equation}\label{lm}
    \begin{cases}
    \displaystyle - \Delta_1 u= {u^{-\gamma}} &  \text{in}\, \ \Omega, \\
    u=0 & \text{on}\, \partial \Omega.
    \end{cases}
\end{equation}

For this problem, global $BV$ regularity of nonnegative solutions when $\gamma>1$ was shown to hold \bk (see \cite[Example 1]{DGOP})
in a rich  class of domains, say $C^{1,1}$ domains,   by observing   that the unique solution of \eqref{lm} is constant and, in particular, it is a suitable power of the Cheeger constant $\lambda_{\Omega}$, namely $$u=\left(\frac{|\Omega|}{\mathrm{Per}(\Omega)}\right)^{\frac{1}{\gamma}}. $$ 

Nevertheless, for general  Lipschitz domains, the question becomes more delicate since it is possible to construct  non-constant solutions of \eqref{lm} whose regularity could  be, a priori, difficult to compute.  For instance, if $\Omega$ is convex, then 
$$
u=\left(-H_\Omega(x)\right)^{-\frac{1}{\gamma}}
$$
is the unique solution of \eqref{lm} (see Example \ref{example2} below),  where $H_\Omega(x)$ is the variational mean curvature of $\Omega$ (see \cite{BGT} for details). Without entering into technicalities, only recall that $H_\Omega (x)\leq 0$ for any $x\in \re^N$, that $H_\Omega$ agrees with the Cheeger constant $\lambda_{\Omega}$ of $\Omega$ on its (unique) Cheeger set, and that $\|H_\Omega\|_{L^{\infty}(\rn)}< \infty$ if and only if $\Omega$ is of class $C^{1,1}$; hence, $u=0$   at non-regular points of $\Omega$ (e.g. at corners). 
\medskip

In this paper, more in general,  we consider nonnegative solutions to 
\begin{equation}
	\label{eq:PbMainIn}
	\begin{cases}
		\dis -\Delta_1 u = h(u)f & \text{in}\;\Omega,\\
		u=0 & \text{on}\;\partial\Omega,
	\end{cases}
\end{equation}
where $\Omega\subset \R^N$ is a Lipschitz bounded domain, $0\leq f \in L^{m}(\Omega)$ with $m\geq 1$ and $h\colon \mathbb{R}^+\to\re^+$ is continuous, possibly singular at $s=0$, and bounded at infinity. Observe that uniqueness may fail to hold, in general,  for problems as \eqref{eq:PbMainIn}. Indeed, as we have seen, solutions can be constant and then $h$ can not have a flat zone in order to expect uniqueness. Nevertheless, there exists at most one finite energy solution if $h$ is decreasing and $f>0$ in $\Omega$ \bk as shown in~\cite[Theorem $3.5$]{DGOP} and~\cite[Theorem $3.4$]{lops}.
\medskip

To summarize, we aim to investigate the following question:
\medskip

\begin{center}{ \center \it When does problem  \eqref{eq:PbMainIn} have a solution in  $BV(\Omega)$?}\end{center}
\medskip

As we will see, the answer to this question depends both on the summability of $f$ and on the behaviour of $h$ at infinity. In broad terms, the price to pay for $f$ being less integrable is that $h$ goes faster to zero at infinity. Surprisingly, the singularity of $h(s)$ at $s=0$ plays no role in the regularity of the solution; for instance, problem \eqref{now} always has a $BV(\Omega)$ solution independently of the value of $\gamma$ under suitable summability assumptions. This is a striking difference with respect to singular $p$-Laplacian problems.
\medskip

In order to be concrete, we show that a $BV(\Omega)$ solution for problem \eqref{eq:PbMainIn} exists if one of the following cases occurs (see Theorem \ref{teo_ex_F}):
\begin{enumerate}[i)]
    \item $m= N$ and $\|f\|_{L^N(\Omega)} <  (\mathcal{S}_1 \displaystyle \limsup_{s\to\infty} h(s))^{-1}$, where $\mathcal{S}_1$ is the Sobolev constant (see Section \ref{sec_prel});
    \item $m=\left( \frac{1^*}{1-\theta} \right)'=\frac{N}{(N-1)\theta + 1}$ and $\displaystyle \limsup_{s\to\infty} h(s) s^\theta < \infty$ for some $0< \theta < 1$;
    \item $m=1$ and $\displaystyle \limsup_{s\to\infty} h(s) s < \infty$.
\end{enumerate}
\medskip

Moreover, all these conditions are shown to be optimal. Therefore, the behaviour of the nonlinearity at infinity produces a regularizing effect in the solutions of problem~\eqref{eq:PbMainIn}. Let us highlight that these results, presented in a wide generality, are new even for nonsingular nonlinearities. Observe also that the case $m<N$ is of particular interest since the solutions always have finite energy despite being in general unbounded.
\medskip

The basic idea to avoid the restrictions that the singularity of $h(s)$ at $s=0$ usually produces relies in  using \bk a suitable nonpositive test function whose derivative behaves as the derivative of the solution. This unexpectedly allows us to show that the behaviour of $h(s)$ when $s$ is small plays a role which is negligible. Observe that problem~\eqref{now} when $\gamma>1$ is included in the above third case and, as a consequence, a finite energy solution exists for {every Lipschitz domain $\Omega$} and every nonnegative $f\in L^1(\Omega)$.  Note that, as a by-product,   our result  extends the Lazer-McKenna threshold~\eqref{condLMp} (only known for smooth data) to the case $p=1$.\bk 

\medskip

In summary, this work represents a  significant  improvement  in the study of $1$-Laplacian type problems with very general nonlinearities. \bk On the one hand, it unifies the study of singular problems, showing that when dealing with the $1$-Laplacian there is no difference between what have traditionally been called mild ($\gamma\leq 1$) and strong ($\gamma>1$) singularities. On the other hand, it also highlights the effect that the behaviour of $h$ at infinity has in the regularity of the solution, providing sharp conditions for which a $BV(\Omega)$ solution exists even for $f$ merely belonging to $L^1(\Omega)$. The simplicity of the methods employed to prove these results makes them powerful and flexible enough to be  concretely  adapted to other problems involving  the $1$-Laplacian operator,  as well as other ones arising from Lagrangian with linear growth (e.g. mean curvature operator, transparent media, and so on).\bk 
\medskip 

The plan of the paper is as follows. In Section \ref{sec_prel} we set the notation and the main functional tools are briefly presented. For the sake of exposition\bk, in Section \ref{sec_pos} we focus on the case of positive data $f$ belonging to $L^N(\Omega)$, while in Section \ref{sec_fnon} we deal with nonnegative $f$'s enjoying the same regularity.  In order to ensure  a clearer  approach to the main ideas of our argument\bk, in both sections we deal with a nonlinearity $h(s)$ with a fixed  behaviour at the origin. In particular, we consider that $h(s)$ near zero is like $s^{-\gamma}$ for some $\gamma>0$. In Section \ref{sec5} we treat the case of a less regular datum $f$, starting in Section~\ref{sec5c} with a datum belonging to the Marcinkiewicz space $L^{N,\infty}(\Omega)$. We continue considering a general $L^m(\Omega)$ ($m\geq 1$) datum and, for the  convenience of the reader, in Section~\ref{sec5a} we  first consider the model problem~\eqref{now}. Then, in Section~\ref{sec5b}, we address a more general nonlinearity $F(x,s)$ as right-hand side without imposing any growth at $s=0$ (see Theorem \ref{teo_ex_F}). Finally, in Section \ref{sec_final} we discuss some further extensions and open problems. Through the construction of explicit examples, we also show the optimality of our assumptions.

\section{Preliminaries and notation}
\label{sec_prel}

In the entire paper $\Omega$ is an open bounded subset of $\R^N$ ($N> 1$) with Lipschitz boundary. For a set $E\subset \R^N$, both $\mathcal H^{N-1}(\partial E)$  and ${\rm Per}(E)$ denote the $(N - 1)$-dimensional Hausdorff measure of the boundary of $E$, while $|E|$ means the classical  $N$-dimensional Lebesgue measure of $E$. 
\medskip

With respect to the functional spaces, we denote by $\mathcal{M}(\Omega)$ the space of Radon measures with finite total variation over $\Omega$. The space of bounded variation functions is denoted by
$$BV(\Omega):= \big\{ u\in L^1(\Omega) : Du \in \mathcal{M}(\Omega)^N \big\},$$ 
where $Du$ stands for the distributional gradient of $u$. Furthermore, we denote $BV_{\rm{loc}}(\Omega)$ as the space of functions such that $u\in BV(\omega)$ for all $\omega\subset \subset \Omega$. We recall that every function $u\in BV(\Omega)$ has a trace defined on $\partial\Omega$ which belongs to $L^1(\partial\Omega)$. We also underline that two equivalent norms in $BV(\Omega)$ are  
$$ \|u\|=\int_\Omega|Du| + \int_\Omega |u|$$
and
$$\displaystyle \|u\|_{BV(\Omega)}=\int_\Omega|Du| + \int_{\partial\Omega}
|u|\, d\mathcal H^{N-1},$$
where  $\int_\Omega |Du|$ denotes the total variation of $Du$ over $\Omega$. \bk
 Let us recall, as  it will be systematically used,  that these norms are lower semicontinuous with respect to the strong convergence in $L^1(\Omega)$. \bk  The precise representative of $u$ is denoted by $u^*$, and it  agrees with the Lebesgue representative of $u$ at the Lebesgue points of $u$, while on the jump set $J_u$ of $u$ it is defined as the algebraic mean of the jump values of $u$ for $\mathcal H^{N-1}$-a.e. $x\in J_u$. 
Let us also recall that, if  $\rho_\epsilon$ is a standard sequence of mollifiers one has the following result.

\begin{proposition}\label{moll}
Let $u \in BV(\Omega)$. Then  $u*\rho_\epsilon$ converges pointwisely to $u^*$ in $\Omega$ up to a set of  $\mathcal H^{N-1}$ null measure. 
\end{proposition}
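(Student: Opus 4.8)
The plan is to prove Proposition~\ref{moll} by reducing the pointwise convergence to two elementary local estimates — one at approximate continuity points, one at jump points — and then to cover $\mathcal H^{N-1}$-almost every point of $\Omega$ by invoking the classical structure theory of $BV$ functions. The key input is the Federer--Vol'pert theorem: for $u\in BV(\Omega)$ the approximate discontinuity set $S_u$ satisfies $\mathcal H^{N-1}(S_u\setminus J_u)=0$. Hence, outside a set of $\mathcal H^{N-1}$ null measure, every $x\in\Omega$ is either an approximate continuity point of $u$ — in which case $u^*(x)$ is the approximate limit of $u$ at $x$ — or a point of the jump set $J_u$ — in which case $u^*(x)=\tfrac12\bigl(u^+(x)+u^-(x)\bigr)$, the algebraic mean of the two one-sided traces. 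So it suffices to show $(u*\rho_\epsilon)(x)\to u^*(x)$ in each of these two cases; note that for fixed $x\in\Omega$ and $\epsilon$ small one has $B_\epsilon(x)\subset\Omega$, so $(u*\rho_\epsilon)(x)$ is well defined.

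First, at an approximate continuity point $x$ one has $\frac{1}{|B_r(x)|}\int_{B_r(x)}|u(y)-u^*(x)|\,dy\to 0$ as $r\to0^+$. Writing $(u*\rho_\epsilon)(x)-u^*(x)=\int_{B_\epsilon(x)}\rho_\epsilon(x-y)\,(u(y)-u^*(x))\,dy$ and using $0\le\rho_\epsilon\le C\epsilon^{-N}$ together with $\int\rho_\epsilon=1$, the right-hand side is bounded in absolute value by $C\,\frac{1}{|B_\epsilon(x)|}\int_{B_\epsilon(x)}|u(y)-u^*(x)|\,dy$, which vanishes as $\epsilon\to0$. Next, at a jump point $x\in J_u$, with jump direction $\nu=\nu_u(x)$ and one-sided traces $u^\pm(x)$, one has $\frac{1}{|B_r(x)|}\int_{B_r^\pm(x,\nu)}|u(y)-u^\pm(x)|\,dy\to0$, where $B_r^\pm(x,\nu)=\{y\in B_r(x):\pm(y-x)\cdot\nu>0\}$. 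Since the standard mollifier $\rho$ is radially symmetric, and hence even, we have $\int_{\{z\cdot\nu>0\}}\rho_\epsilon\,dz=\int_{\{z\cdot\nu<0\}}\rho_\epsilon\,dz=\tfrac12$; splitting $\int_{B_\epsilon(x)}\rho_\epsilon(x-y)u(y)\,dy$ along the hyperplane through $x$ orthogonal to $\nu$ and applying the previous estimate on each half-ball yields
\[
(u*\rho_\epsilon)(x)\ \longrightarrow\ \tfrac12\bigl(u^+(x)+u^-(x)\bigr)=u^*(x).
\]
Combining the two cases with the reduction step finishes the argument.

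The hard part is, honestly, not hard: this is a classical fact, and the only points demanding care are the use of the symmetry of $\rho$ in the jump case — without it one would obtain convergence to a different convex combination of $u^\pm(x)$, in general not the arithmetic mean, and the statement would fail on $J_u$ — and the appeal to the $BV$ structure theorem, which is exactly what ensures that approximate continuity points and jump points together exhaust $\Omega$ up to an $\mathcal H^{N-1}$-negligible set. A fully self-contained proof would essentially amount to reproving Federer--Vol'pert, so I would simply quote it.
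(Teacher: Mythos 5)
Your proof is correct and is exactly the classical argument the paper relies on by citation (it states Proposition~\ref{moll} as a recalled fact from the $BV$ literature, cf.\ \cite{AFP}, without proof): reduce to approximate continuity points and jump points via Federer--Vol'pert, and use the radial symmetry of $\rho$ to get the arithmetic mean on $J_u$. The two local estimates and the caveat about symmetry of the mollifier are precisely the points that matter, so nothing is missing.
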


For more properties concerning  the $BV$ space \bk we refer the reader to \cite{AFP}, from where we mainly set our notations. 
\medskip

Now let us briefly present the $L^\infty$-divergence-measure vector fields theory due to \cite{An} and \cite{CF}. To establish some notations, we denote by $\DM(\Omega)$ the set defined as 
$$\DM(\Omega):= \big\{ z\in L^\infty(\Omega)^N : \operatorname{div}z \in \mathcal{M}(\Omega) \big\},$$ 
and by $\DM_{\rm{loc}}(\Omega)$ the set of the vector fields $z\in L^\infty(\Omega)^N$ such that  $\operatorname{div}z\in \mathcal{M}(\omega)$,  $\forall \omega\subset\!\subset\Omega$.
\medskip

First observe that, if $z\in \DM(\Omega)$, then $\operatorname{div}z$ can be shown to be absolutely continuous with respect to $\mathcal H^{N-1}$ (see~\cite[Proposition~3.1]{CF}). Moreover, a ``dot product'' between a vector field $z\in \DM(\Omega)$ and the gradient $Du$ of a function $u\in BV(\Omega)$ can be defined under some compatibility conditions. This can be done through the distribution $(z,Dv)\colon C^1_c(\Omega)\to \mathbb{R}$ defined as 
\begin{equation}\label{dist1}
	\langle(z,Dv),\varphi\rangle:=-\int_\Omega v^*\varphi\operatorname{div}z-\int_\Omega
	vz\cdot\nabla\varphi, \,\quad \forall \varphi\in C_c^1(\Omega).
\end{equation}

In~\cite{C}, the author shows that~\eqref{dist1} is well defined if $z\in \DM(\Omega)$ and $v\in BV(\Omega) \cap L^\infty(\Omega)$. More general compatibility conditions are proved in~\cite{DGS}, where the authors prove that~\eqref{dist1} is well posed if $z\in \DM_\mathrm{loc}(\Omega)$ and $v\in BV_\mathrm{loc}(\Omega)$ with $v^*\in L^1_\mathrm{loc}(\Omega, \operatorname{div}z)$. In all cases, the distribution defined in~\eqref{dist1} is a Radon measure having locally finite total variation, and  satisfying 
\begin{equation*} 
	|\langle (z, Dv), \varphi\rangle| \le \|\varphi\|_{L^{\infty}(\omega) }\| z
	\|_{L^{\infty}(\omega)^N} \int_{\omega} |Dv|\,,
\end{equation*}
for all open set $\omega \subset\subset \Omega$ and for all $\varphi\in C_c^1(\omega)$. 
\medskip

We observe that, if $u\in BV(\Omega)$, then $(z,Du)$ has finite total variation. In general, for every \(v\in BV_{\rm{loc}}(\Omega)\) the measure $(z, Dv)$ is absolutely continuous with respect to $|Dv|$, so it holds
\begin{equation*} 
	(z, Dv) = \theta(z,Dv,x) \, {|Dv|},
\end{equation*}
where \(\theta(z, Dv, \cdot)\) stands for
the Radon-Nikodym derivative of $(z, Dv)$ with respect to $|Dv|$. Let us state a result which will be used throughout this paper.

\begin{lemma}{\cite[Proposition $4.5$]{CD}}
	\label{lem:Composition}
	Let $z\in \mathcal{D}\mathcal{M}_{\mathrm{loc}}(\Omega)$ and $u\in BV_{\rm loc}(\Omega) \cap L^\infty_\mathrm{loc}(\Omega)$. Let $\Lambda\colon \R\to\R$ be a non-decreasing locally Lipschitz function. Then
	\[
	\theta(z, D\Lambda(u),x) = \theta(z, Du,x) \quad \text{for } |D\Lambda(u)|-{\rm a.e.}\ x \in \Omega.
	\]
	As a consequence, $(z,Du)=|Du|$ as measures implies $(z,D\Lambda(u))= |D\Lambda(u)|$ as measures.
\end{lemma}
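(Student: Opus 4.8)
Since $\Lambda$ is locally Lipschitz and $u$ is locally bounded, $\Lambda(u)\in BV_{\mathrm{loc}}(\Omega)\cap L^\infty_{\mathrm{loc}}(\Omega)$, so $(z,D\Lambda(u))$ is a well-defined Radon measure and $\theta(z,D\Lambda(u),\cdot)$ makes sense. The statement is local, so the plan is to reduce to $z\in\DM(\omega)$, $u,\Lambda(u)\in BV(\omega)\cap L^\infty(\omega)$ with $\omega\subset\subset\Omega$ and $\Lambda$ globally Lipschitz and non-decreasing — one may replace $\Lambda$ outside $[-\|u\|_{L^\infty(\omega)},\|u\|_{L^\infty(\omega)}]$ by a constant on each side without affecting $D\Lambda(u)$ or $|D\Lambda(u)|$ on $\omega$. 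Then I would invoke the chain rule in $BV$ (see \cite{AFP}),
$$D\Lambda(u)=\Lambda'(u)\nabla u\,\mathcal L^N+\Lambda'(u^*)\,D^cu+\big(\Lambda(u^+)-\Lambda(u^-)\big)\nu_u\,\mathcal H^{N-1}\res J_{\Lambda(u)},$$
with $J_{\Lambda(u)}\subseteq J_u$ up to $\mathcal H^{N-1}$-negligible sets and with the same orientation. The three summands are mutually singular and each is absolutely continuous with respect to the corresponding part of $Du$; in particular $|D\Lambda(u)|\ll|Du|$, and it is enough to check the equality of densities separately on the absolutely continuous, Cantor and jump parts.

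On the absolutely continuous part the pairing measure is $z\cdot\nabla(\cdot)\,\mathcal L^N$, so $\theta(z,D\Lambda(u),x)=\frac{z\cdot\nabla\Lambda(u)}{|\nabla\Lambda(u)|}=\frac{\Lambda'(u)\,z\cdot\nabla u}{\Lambda'(u)\,|\nabla u|}=\theta(z,Du,x)$ at $|D\Lambda(u)|$-a.e.\ point of this part, i.e.\ where $\Lambda'(u)|\nabla u|>0$. On the jump part I would use the known structure of $(z,Dv)\res J_v$ (see \cite{CF,CD}): its density with respect to $|Dv|\res J_v=|v^+-v^-|\,\mathcal H^{N-1}\res J_v$ is of the form $\sigma_z(x,\nu_v(x))\,\sgn(v^+(x)-v^-(x))$, where $\sigma_z$ is bounded, Borel, and depends only on the traces of $z$ near $J_v$ and on $\nu_v$, not on the jump heights of $v$. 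Since on $J_{\Lambda(u)}$ the orientation $\nu_u$ is the same for $u$ and $\Lambda(u)$ and, $\Lambda$ being non-decreasing, $\sgn(\Lambda(u^+)-\Lambda(u^-))=\sgn(u^+-u^-)$, the two densities coincide there.

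The main obstacle is the Cantor (diffuse singular) part, where $D^c\Lambda(u)=\Lambda'(u^*)D^cu$ but $(z,\cdot)$ has no explicit pointwise expression. I would first reduce to smooth $\Lambda$ (any non-decreasing Lipschitz $\Lambda$ being a uniform limit of non-decreasing $\Lambda_n\in C^1$ with $\Lambda_n'\to\Lambda'$ boundedly a.e.), then mollify $u$ to $u_\varepsilon:=u*\rho_\varepsilon$ and use the elementary identity $(z,D\Lambda(u_\varepsilon))=\Lambda'(u_\varepsilon)\,(z,Du_\varepsilon)$, valid because $\Lambda(u_\varepsilon)$ and $u_\varepsilon$ are $C^1$. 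Letting $\varepsilon\to0$, using $(z,Du_\varepsilon)\to(z,Du)$ and $u_\varepsilon\to u^*$ $\mathcal H^{N-1}$-a.e.\ (Proposition~\ref{moll}), one expects to obtain the chain rule for the pairing on the diffuse part, $(z,D^{\mathrm{diff}}\Lambda(u))=\Lambda'(u^*)(z,D^{\mathrm{diff}}u)$; together with $|D^{\mathrm{diff}}\Lambda(u)|=\Lambda'(u^*)|D^{\mathrm{diff}}u|$ and division on $\{\Lambda'(u^*)>0\}$, this is the asserted equality of densities. An alternative, more structural route uses the coarea formula for the pairing, $(z,D\Lambda(u))=\int_{\re}(z,D\chi_{\{u>s\}})\,\Lambda'(s)\,ds$ and $|D\Lambda(u)|=\int_{\re}|D\chi_{\{u>s\}}|\,\Lambda'(s)\,ds$, combined with $\theta(z,D\chi_{\{u>s\}},\cdot)=\theta(z,Du,\cdot)$ for a.e.\ $s$ and $|D\chi_{\{u>s\}}|$-a.e., so that integration gives $(z,D\Lambda(u))=\theta(z,Du,\cdot)\,|D\Lambda(u)|$ directly. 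In either route the hard part is the limit passage in the product $\Lambda'(u_\varepsilon)(z,Du_\varepsilon)$ — respectively, the two coarea identities for the pairing and the identification of the density on level sets — which relies on the fine structure theory of $\DM$ vector fields from the cited references; this is where the real work lies.

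Finally, the ``as a consequence'' part is immediate: if $(z,Du)=|Du|$, then $\theta(z,Du,\cdot)=1$ $|Du|$-a.e., hence $|D\Lambda(u)|$-a.e.\ since $|D\Lambda(u)|\ll|Du|$; combined with the identity just proved, $\theta(z,D\Lambda(u),\cdot)=1$ $|D\Lambda(u)|$-a.e., that is $(z,D\Lambda(u))=|D\Lambda(u)|$.
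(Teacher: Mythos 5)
The paper does not actually prove this lemma: it is imported verbatim as \cite[Proposition~4.5]{CD}, so there is no in-paper argument to compare yours against. Judged on its own terms, your sketch is structurally sensible. The localization, the reduction to a globally Lipschitz $\Lambda$, the mutual singularity of the three parts of $D\Lambda(u)$ with $|D\Lambda(u)|\ll|Du|$, the computation on the absolutely continuous part (using that the a.c.\ part of the pairing is $z\cdot\nabla v\,\mathcal L^N$), the observation that on the jump set the density of $(z,Dv)$ with respect to $|Dv|$ depends only on the normal traces of $z$ and on $\nu_v$ together with the sign of the jump (which a non-decreasing $\Lambda$ preserves), and the final ``as a consequence'' deduction are all correct. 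Your coarea route for the diffuse part is in fact the mechanism by which Crasta and De Cicco prove the result.

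The genuine gap is exactly where you locate it, and it is not closed by either of your two routes. In the coarea route, the identities $(z,D\Lambda(u))=\int_{\re}(z,D\chi_{\{u>s\}})\Lambda'(s)\,ds$, $|D\Lambda(u)|=\int_{\re}|D\chi_{\{u>s\}}|\Lambda'(s)\,ds$, and the statement $\theta(z,D\chi_{\{u>s\}},\cdot)=\theta(z,Du,\cdot)$ for a.e.\ $s$ are not background facts one may simply ``combine'': the coarea formula for the Anzellotti pairing and the identification of the density on level sets constitute the substance of \cite[Prop.~4.5]{CD} and of the sections preceding it, so as written the argument assumes what it must prove. In the mollification route there are two unaddressed analytic obstructions: (a) from $(z,Du_\varepsilon)\rightharpoonup(z,Du)$ weakly-$*$ and $\Lambda'(u_\varepsilon)\to\Lambda'(u^*)$ pointwise $\mathcal H^{N-1}$-a.e.\ one cannot in general identify the weak-$*$ limit of the product $\Lambda'(u_\varepsilon)(z,Du_\varepsilon)$ without additional equi-integrability or structural information on the limiting measure (this is precisely the fine-structure input you defer to the references); and (b) the convergence $(z,D\Lambda(u_\varepsilon))\rightharpoonup(z,D\Lambda(u))$ requires strict convergence $|D\Lambda(u_\varepsilon)|(\omega)\to|D\Lambda(u)|(\omega)$, which does not follow from mollifying $u$ alone since $\Lambda(u*\rho_\varepsilon)\neq\Lambda(u)*\rho_\varepsilon$, and you do not justify it. So the proposal is a faithful reconstruction of the strategy of \cite{CD} on the a.c.\ and jump parts, but on the Cantor part it is a program rather than a proof.
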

\begin{remark}
    \label{rem:Composition}
    From Lemma~\ref{lem:Composition} it can be deduced that if $\Lambda\colon \R\to\R$ is an increasing locally Lipschitz function, then $\theta(z, D\Lambda(u),x) = \theta(z, Du,x)$ for $|Du|$-$\mathrm{a.e.}$ $x \in \Omega$. Then, the reciprocal of Lemma~\ref{lem:Composition} holds, i.e., $(z,D\Lambda(u))= |D\Lambda(u)|$ as measures implies $(z,Du)=|Du|$ as measures. \triang
\end{remark}

Since $\Omega$ has Lipschitz boundary, \bk the outward normal unit vector $\nu(x)$ is defined for $\mathcal H^{N-1}$-almost every $x\in\partial\Omega$. In \cite{An}, it is shown that every $z \in \mathcal{DM}^{\infty}(\Omega)$ possesses
a weak trace on $\partial \Omega$ of the
normal component of  $z$ which is denoted by
$[z, \nu]$. Moreover, it holds
\begin{equation*} 
	\|[z,\nu]\|_{L^\infty(\partial\Omega)}\le \|z\|_{L^\infty(\Omega)^N},
\end{equation*}
and, if $z \in \mathcal{DM}^{\infty}(\Omega)$ and $v\in BV(\Omega)\cap L^\infty(\Omega)$, one has (see \cite[Lemma $5.6$]{C} or \cite[Proposition $2$]{ADS})
\begin{equation}\label{des2}
	v[z,\nu]=[vz,\nu].
\end{equation}

The following Green type formula holds. 

\begin{Proposition}{\cite[Proposition $2.6$]{DGS}}\label{poiu}
	Let $z \in \mathcal{DM}_{\rm{loc}}^{\infty}(\Omega)$. Let $v\in BV(\Omega)\cap L^\infty(\Omega)$ be such that $v^*\in L^1(\Omega, {\rm div} z)$.
	Then $vz\in \mathcal{DM}^{\infty}(\Omega)$ and the following  holds
	\begin{equation}\label{green}
		\int_{\Omega} v^* \,{\rm div} z  + \int_{\Omega} (z, Dv) =
		\int_{\partial \Omega} [vz, \nu] \ d\mathcal H^{N-1}\,.
	\end{equation}
\end{Proposition}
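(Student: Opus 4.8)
The plan is to split the argument into a purely local product rule and a global application of the divergence theorem. Concretely, I would first show that $vz$ is a (global) bounded divergence-measure field on $\Omega$, with the Leibniz identity
$$\operatorname{div}(vz) = v^*\operatorname{div}z + (z,Dv) \qquad \text{as Radon measures on }\Omega,$$
and then apply the elementary Gauss--Green formula of the $\mathcal{DM}^\infty$ theory to the field $vz$ on the bounded Lipschitz domain $\Omega$.

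For the product rule, fix $\varphi\in C^1_c(\Omega)$. By definition of the distributional divergence of the $L^\infty$ field $vz$ one has $\langle\operatorname{div}(vz),\varphi\rangle = -\int_\Omega vz\cdot\nabla\varphi$; comparing this with \eqref{dist1} gives
$$\langle\operatorname{div}(vz),\varphi\rangle = \langle(z,Dv),\varphi\rangle + \int_\Omega v^*\varphi\operatorname{div}z,$$
so that $\operatorname{div}(vz) = (z,Dv) + v^*\operatorname{div}z$ as distributions. Both terms on the right are finite Radon measures: since $v\in BV(\Omega)$ and $z\in L^\infty(\Omega)^N$, the pairing satisfies $|(z,Dv)|(\Omega)\le\|z\|_{L^\infty(\Omega)^N}\,|Dv|(\Omega)<\infty$, while $v^*\operatorname{div}z$ is finite because $v^*\in L^1(\Omega,\operatorname{div}z)$ by hypothesis (recall that $\operatorname{div}z$ is absolutely continuous with respect to $\mathcal{H}^{N-1}$, so $v^*$ is $\operatorname{div}z$-measurable). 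Hence $\operatorname{div}(vz)$, agreeing as a distribution with a finite measure, is itself a finite Radon measure, i.e. $vz\in\mathcal{DM}^\infty(\Omega)$, and the displayed identity holds as an equality of measures.

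Since $vz\in\mathcal{DM}^\infty(\Omega)$ and $\partial\Omega$ is Lipschitz, $vz$ admits a normal trace $[vz,\nu]\in L^\infty(\partial\Omega)$ and the basic divergence theorem for $\mathcal{DM}^\infty$ fields --- which corresponds to taking the constant function $1$ as admissible test object (see \cite{An,CF}) --- gives
$$\int_\Omega\operatorname{div}(vz) = \int_{\partial\Omega}[vz,\nu]\,d\mathcal{H}^{N-1}.$$
Substituting the product rule into the left-hand side yields exactly \eqref{green}.

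I expect the only genuinely delicate point to be the globalization carried out in the second paragraph. Because $z$ is merely in $\mathcal{DM}^\infty_{\rm loc}(\Omega)$, the measure $\operatorname{div}z$ need not have finite mass on all of $\Omega$, and a priori $(z,Dv)$ defined through \eqref{dist1} is only of locally finite total variation; it is precisely the two global assumptions $v\in BV(\Omega)$ (not merely $BV_{\rm loc}(\Omega)$) and $v^*\in L^1(\Omega,\operatorname{div}z)$ that upgrade both $(z,Dv)$ and $v^*\operatorname{div}z$ to globally finite measures, and thereby legitimize applying the boundary Gauss--Green formula to $vz$. Once this is secured the remainder is immediate, with no need to approximate $\Omega$ by smoother domains; the only external inputs are the well-posedness of the pairing \eqref{dist1} for $\mathcal{DM}^\infty_{\rm loc}$ fields and $BV_{\rm loc}$ functions (\cite{DGS}) and the elementary $\mathcal{DM}^\infty$ divergence theorem.
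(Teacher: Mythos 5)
Your argument is correct and is essentially the standard proof of this result: the identity $\operatorname{div}(vz)=v^*\operatorname{div}z+(z,Dv)$ is immediate from the very definition \eqref{dist1} of the pairing, the two hypotheses $v\in BV(\Omega)$ and $v^*\in L^1(\Omega,\operatorname{div}z)$ are exactly what make both terms finite measures (so that $vz\in\mathcal{DM}^\infty(\Omega)$), and the formula then follows from the Gauss--Green theorem for $\mathcal{DM}^\infty$ fields on a bounded Lipschitz domain applied with test function $1$. Note that the paper does not prove this proposition but quotes it from \cite{DGS}; your reconstruction matches the argument used there, and you correctly isolate the one genuinely delicate point, namely that the globalization from $\mathcal{DM}^\infty_{\rm loc}$ to a boundary identity rests precisely on the two global integrability hypotheses.
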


 Finally, \bk if $z \in \DM_{\rm{loc}}(\Omega)$ and $v\in BV(\Omega)\cap L^\infty(\Omega)$ are such that $vz\in \mathcal{DM}^{\infty}(\Omega)$, it can be proved (see \cite[Proposition 2.7]{DGS}) that 
\begin{equation}
	\label{p27} |[vz,\nu]|\le \left|v_{\res {\partial\Omega}} \right|\,\|z\|_{L^\infty(\Omega)^N}\,\quad\mathcal H^{N-1}\hbox{-a.e. on }\partial\Omega\,.
\end{equation}
\bk

{\bf Notation.}  We denote by $\mathcal{S}_1$ the best constant in the Sobolev inequality for functions in $BV(\Omega)$, that  is
\[
\|v\|_{L^{\frac{N}{N-1}}(\Omega)} \le \mathcal{S}_1 \|v\|_{BV(\Omega)}, \ \ \forall v \in BV(\Omega).
\]
Recall that $\mathcal{S}_1=\big(N\omega_{N}^{\frac{1}{N}}\big)^{-1}\,,$
where $\omega_{N}$ is the volume of the unit sphere of $\rn$ (see for instance \cite{talenti}).

The following functions will be widely used in the sequel: for a fixed $k>0$, $T_{k}$ and $G_{k}$ are the functions defined by
$$
T_k(s)=\max (-k,\min (s,k)) \qquad \text{and} \qquad G_k(s)=(|s|-k)^+ \operatorname{sign}(s). 
$$
Note in particular that   $T_k(s) + G_k(s)=s$, for any $s\in \mathbb{R}$. Another useful function will be $S_\delta$, which is defined as 
\begin{align}\label{Sdelta}
	\displaystyle
	S_\delta(s):=
	\begin{cases}
		0 \ \ &s\le \delta, \\
		\displaystyle\frac{s-\delta}{\delta} \ \ &\delta <s< 2\delta, \\
		1 \ \ &s\ge 2\delta.
	\end{cases}
\end{align}

Regarding the integrals notation, if there is no ambiguity we will use the notation
$$
\int_\Omega f:=\int_\Omega f(x)\,dx
$$
and, if $\mu$ is a Radon measure,
\[\int_\Omega f\mu:=\int_\Omega f\, d\mu\,.\]

\medskip

To conclude, we denote by $C$ several constants whose value may change from line to line and, sometimes, on the same line. These values only depend on the data but they do not depend on the indexes of the sequences. We underline the use of the standard convention to do not relabel an extracted compact subsequence.

\section{The case of  a positive datum $f\in L^{N}(\Omega)$}
\label{sec_pos}

In this section  we deal with the existence of nonnegative solutions to 
\begin{equation}
	\label{eq:PbMain}
	\begin{cases}
		\dis -\Delta_1 u = h(u)f & \text{in}\;\Omega,\\
		u=0 & \text{on}\;\partial\Omega,
	\end{cases}
\end{equation}
where  $\Delta_1 u= \operatorname{div}(|D u|^{-1}D u)$ \bk is the $1$-Laplace operator, $f\in L^N(\Omega)$ is positive in $\Omega$,  and $h\colon [0,\infty)\mapsto [0,\infty]$ is assumed to be a continuous function finite outside the origin satisfying
\begin{equation}\label{eq:hyp_h}
	\displaystyle \exists\; c_1, s_1, \gamma> 0\;\ \text{such that}\;\  h(s)\le \frac{c_1}{s^\gamma} \ \text{ if } s\leq s_1, \ \ \ h(\infty):=\limsup_{s\to\infty}h(s) < \infty.
\end{equation}
Let us stress that for the entire paper we use the notation \begin{equation*}
	\displaystyle h_k(\infty):=\sup_{s\in [k,\infty)}h(s).
\end{equation*}

Let us first precise what we mean by a solution to \eqref{eq:PbMain} in this case.

\begin{defin}\label{def_fpos}
		Let $0< f\in L^N(\Omega)$. A nonnegative function $u\in BV_{\rm loc}(\Omega)$ is a solution to problem~\eqref{eq:PbMain} if there exists $z\in \mathcal{D}\mathcal{M}^\infty(\Omega)$ with $\|z\|_{L^\infty(\Omega)^N}\le 1$ such that 
	\begin{gather}        
        \label{def:distrp=1}
		-\operatorname{div}z = h(u)f \ \ \text{as measures in }\Omega, \\[1.5mm]
        \label{def:zp=1}
 		(z,DT_k(u))=|DT_k(u)| \ \ \text{as measures in } \Omega \text{ (for every } k>0),
	\end{gather}        
	and one of the following conditions holds:
	\begin{equation}  
        \label{def:bordo}
        \lim_{\varepsilon\to 0^+}\fint_{\Omega\cap B(x,\epsilon)} u (y) dy = 0 \ \ \ \text{or} \ \ \ [z,\nu] (x)= -1 \ \ \ \text{for  $\mathcal{H}^{N-1}$-a.e. } x \in \partial\Omega.			
	\end{equation}
\end{defin}

\begin{remark}\label{remarkDefinizione}
Let us underline that the above definition is essentially the one given in \cite[Definition $6.1$]{DGOP}. The only difference relies on the fact that here we do not require $u$ to be bounded as we aim to deduce it as a regularity property (see Theorem \ref{teo_reg} below). Let us also recall that \eqref{def:distrp=1} and \eqref{def:zp=1} is the way in which the equation in \eqref{eq:PbMain} is meant, while  \eqref{def:bordo} is the weak way in which the boundary datum is intended, which is nowadays classical for these kind of problems. Indeed, if $u\in BV(\Omega)$ (i.e. $u$ admits a trace on $\partial\Omega$) this condition turns out to be clearly equivalent to the well-known boundary condition $u(x)([z,\nu] (x)+1)=0$ for  $\mathcal{H}^{N-1}$-a.e. $x \in \partial\Omega$ (see \cite{ACM}). 
\triang
\end{remark}

\begin{remark}
\label{este}
Let us also remark that one can extend the set of functions to test~\eqref{def:distrp=1}. In fact, once~\eqref{def:distrp=1} holds, then~\cite[Lemma 5.3]{DGOP} implies that $ \operatorname{div}z \in L^1(\Omega)$ and  
\begin{equation*}
-\int_\Omega v\operatorname{div}z  = \int_\Omega h(u) f v\,  \ \ \text{for all}\ v\in BV(\Omega)\cap L^{\infty}(\Omega).
\end{equation*} 
\triang
\end{remark}

Let us state the main result of this section. 

\begin{theorem}\label{teo_reg}
	Let $h$ satisfy \eqref{eq:hyp_h} and let $0<f\in L^N(\Omega)$ be such that \begin{equation}\label{cond1}\|f\|_{L^N(\Omega)} < \left(\mathcal{S}_1h(\infty)\right)^{-1}.\end{equation} 	Let $u$ be a solution in the sense of Definition \ref{def_fpos} such that $u^\alpha\in BV(\Omega)$ for some $\alpha>0$. Then $u\in BV(\Omega) \cap L^\infty(\Omega)$.
\end{theorem}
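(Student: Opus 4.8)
The plan is to split the argument in two parts. First I would show that $u\in L^\infty(\Omega)$ by a Stampacchia-type iteration based on test functions modelled on $u^\alpha$; then, once boundedness is available, I would upgrade $u\in BV_{\rm loc}(\Omega)$ to $u\in BV(\Omega)$ by testing the equation against $u$ cut off near $\partial\Omega$. For the boundedness, given $j>k>0$, the test function I would use is
$$v:=\big(T_j(u)^{\alpha}-k^{\alpha}\big)^{+}=G_{k^{\alpha}}\big(T_{j^{\alpha}}(u^{\alpha})\big)\,.$$
Since $G_{k^{\alpha}}\circ T_{j^{\alpha}}$ is a bounded Lipschitz function vanishing at the origin and $u^{\alpha}\in BV(\Omega)$, one gets $v\in BV(\Omega)\cap L^\infty(\Omega)$; writing instead $v=\Lambda(T_j(u))$ with $\Lambda(s)=(s^{\alpha}-k^{\alpha})^{+}$ — which is non-decreasing and locally Lipschitz on $\R$, the only non-Lipschitz point of $s\mapsto s^{\alpha}$, namely $s=0$, falling in the region where $\Lambda\equiv0$ when $\alpha<1$ — Lemma~\ref{lem:Composition} together with \eqref{def:zp=1} gives $(z,Dv)=|Dv|$ as measures. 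Moreover $0\le v\le C\,u$ pointwise for some $C=C(j,k,\alpha)$, so the trace of $v$ vanishes wherever the first alternative in \eqref{def:bordo} holds; combining this with the second alternative and \eqref{des2} one obtains $v_{\res\partial\Omega}[z,\nu]=-v_{\res\partial\Omega}$ for $\mathcal H^{N-1}$-a.e.\ $x\in\partial\Omega$. Using $v$ in the formulation (recall $\operatorname{div}z\in L^1(\Omega)$ and Remark~\ref{este}) and the Green formula \eqref{green}, I expect to reach
$$\|v\|_{BV(\Omega)}=\int_\Omega|Dv|+\int_{\partial\Omega}v_{\res\partial\Omega}\,d\mathcal H^{N-1}=\int_\Omega h(u)\,f\,v\,,$$
and then, since $v$ is supported in $\{u>k\}$ where $h(u)\le h_k(\infty)$, by the Sobolev and Hölder inequalities,
$$\|v\|_{L^{\frac{N}{N-1}}(\Omega)}\le\mathcal{S}_1\|v\|_{BV(\Omega)}\le\mathcal{S}_1\,h_k(\infty)\,\|f\|_{L^N(\Omega)}\,\|v\|_{L^{\frac{N}{N-1}}(\Omega)}\,.$$
Because $h_k(\infty)\downarrow h(\infty)$ as $k\to\infty$ and \eqref{cond1} holds, I can fix $k_0$ with $\mathcal{S}_1 h_{k_0}(\infty)\|f\|_{L^N(\Omega)}<1$; since $\|v\|_{L^{N/(N-1)}(\Omega)}$ is finite ($v$ bounded, $|\Omega|<\infty$), this forces $v=0$ a.e., i.e.\ $T_j(u)\le k_0$ a.e., and as $j>k_0$ this means $0\le u\le k_0$ a.e., so $u\in L^\infty(\Omega)$.

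For the second part, if $\alpha\le1$ boundedness already suffices, since $s\mapsto\min(s^{1/\alpha},k_0)$ is Lipschitz and thus $u=T_{k_0}(u)=\min\big((u^{\alpha})^{1/\alpha},k_0\big)\in BV(\Omega)$; in general I would argue as follows. From \eqref{def:zp=1} with $k>k_0$ one has $(z,Du)=|Du|$ as measures in $\Omega$, and $h(u)f=-\operatorname{div}z\in L^1(\Omega)$. For $\delta>0$ set $\phi_\delta:=S_\delta\big(d(\cdot,\partial\Omega)\big)$, a Lipschitz function with $0\le\phi_\delta\uparrow1$ in $\Omega$, $\phi_\delta\equiv0$ near $\partial\Omega$ and $|\nabla\phi_\delta|\le\tfrac1\delta\,\car{\{\delta<d(\cdot,\partial\Omega)<2\delta\}}$ a.e. Then $\phi_\delta u\in BV(\Omega)\cap L^\infty(\Omega)$ is supported away from $\partial\Omega$, so \eqref{green} (whose boundary term then vanishes) together with the Leibniz rule $(z,D(\phi_\delta u))=\phi_\delta(z,Du)+u^{*}z\cdot\nabla\phi_\delta\,dx$ — which follows from \eqref{dist1} — and $(z,Du)=|Du|$ yields
$$\int_\Omega\phi_\delta|Du|=\int_\Omega\phi_\delta\,u\,h(u)\,f-\int_\Omega u^{*}z\cdot\nabla\phi_\delta\,dx\,.$$
Here the first term is bounded by $k_0\int_\Omega h(u)f<\infty$, while $\big|\int_\Omega u^{*}z\cdot\nabla\phi_\delta\,dx\big|\le\tfrac1\delta\int_{\{\delta<d(\cdot,\partial\Omega)<2\delta\}}u\le\tfrac{k_0}{\delta}\,\big|\{d(\cdot,\partial\Omega)<2\delta\}\big|\le C\,k_0$, using $\|z\|_{L^\infty}\le1$, $u\le k_0$, and the fact that a boundary collar of width $t$ of a Lipschitz domain has measure $O(t)$. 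Thus $\int_\Omega\phi_\delta|Du|$ is bounded uniformly in $\delta$, and letting $\delta\to0$ (monotone convergence) gives $\int_\Omega|Du|<\infty$, i.e.\ $u\in BV(\Omega)$. Together with the first part this proves $u\in BV(\Omega)\cap L^\infty(\Omega)$.

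I expect the main obstacle to be the second part: since a priori $u$ need not even have a trace on $\partial\Omega$, one cannot test the equation directly with $u$, which is why the cutoff $\phi_\delta$ and the control of the boundary collar term are needed — and this is precisely where boundedness (obtained in the first part) must be combined with the equation. A secondary, purely technical, point is to verify that $(z,Dv)=|Dv|$ and the Leibniz rule survive the composition $s\mapsto s^{\alpha}$ when $\alpha<1$ (where $s\mapsto s^{\alpha}$ is not Lipschitz at $0$); this causes no trouble because the truncation $G_{k^{\alpha}}$ always removes the set $\{u=0\}$ on which the obstruction lies.
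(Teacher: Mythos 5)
Your proposal is correct. The boundedness step is essentially the paper's argument: you test with $(T_j(u)^{\alpha}-k^{\alpha})^{+}$ where the paper tests with $G_k(T_\ell(u))$ (written as a Lipschitz function of $u^{\alpha}$ to justify membership in $BV(\Omega)$), but both functions live on $\{u>k\}$, both satisfy $(z,Dv)=|Dv|$ via Lemma~\ref{lem:Composition}, and both lead to the same absorption $\|v\|_{BV(\Omega)}\le \mathcal S_1 h_k(\infty)\|f\|_{L^N(\Omega)}\|v\|_{BV(\Omega)}$ forcing $v\equiv 0$ for $k$ large. Where you genuinely diverge is the passage from $BV_{\rm loc}$ to $BV$: the paper tests with the \emph{nonpositive} function $v_n=(u^{\alpha}+\tfrac1n)^{1/\alpha}-\|u\|_{L^\infty(\Omega)}-1$, drops the nonnegative right-hand side, and reads off the explicit bound $\int_\Omega|Du|\le(\|u\|_{L^\infty(\Omega)}+1)\mathcal H^{N-1}(\partial\Omega)$ directly from the boundary term of the Green formula; you instead cut off with $\phi_\delta=S_\delta(d(\cdot,\partial\Omega))$, invoke a Leibniz rule for the Anzellotti pairing (which does follow from \eqref{dist1}, modulo the routine extension of the pairing to Lipschitz compactly supported test functions) and the collar volume bound $|\{d(\cdot,\partial\Omega)<t\}|\le Ct$ for Lipschitz domains. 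Both routes work; the paper's has the advantage of needing nothing beyond \eqref{green}, \eqref{des2} and Lemma~\ref{lem:Composition} and of yielding a cleaner quantitative estimate, while yours avoids the slightly delicate choice of the shift $(u^\alpha+\tfrac1n)^{1/\alpha}$ (needed in the paper precisely because $s\mapsto s^{1/\alpha}$ fails to be Lipschitz at the origin when $\alpha>1$, an issue you correctly isolate) at the price of two extra standard ingredients. No gap.
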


As a consequence of the above result and of the existence results given in \cite{DGOP}, we give the following existence theorem.
\begin{theorem}\label{teo_ex}
		Let $h$ satisfy \eqref{eq:hyp_h} and let $0<f\in L^N(\Omega)$ be such that \eqref{cond1} holds. 
Then there exists a solution $u\in BV(\Omega) \cap L^\infty(\Omega)$ to \eqref{eq:PbMain}  in the sense of Definition \ref{def_fpos}\bk. Moreover, if $h$ is decreasing then $u$ is the unique solution of \eqref{eq:PbMain}  in $BV(\Omega)$. 
\end{theorem}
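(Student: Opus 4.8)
The plan is to combine the existence machinery of \cite{DGOP} with the new regularity statement of Theorem \ref{teo_reg}. First I would invoke the existence theorem from \cite{DGOP} (applied to the approximating problems with $p>1$ and passing to the limit $p\to 1^+$, as done there): since $h$ satisfies \eqref{eq:hyp_h} and $0<f\in L^N(\Omega)$, there is a nonnegative $u$ solving \eqref{eq:PbMain} in the sense of Definition \ref{def_fpos}, together with a vector field $z\in\mathcal{DM}^\infty(\Omega)$, $\|z\|_{L^\infty}\le 1$, satisfying \eqref{def:distrp=1}, \eqref{def:zp=1} and \eqref{def:bordo}. A priori this $u$ need only be in $BV_{\rm loc}(\Omega)$. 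The crucial point is that this solution, being obtained as a limit of the $u_p$'s, comes with the a priori bound $u^{\max(1,\gamma)}\in BV(\Omega)$ (the estimate obtained formally by multiplying the equation by $u^\gamma$, rigorously established in \cite{DGOP,lops}); in particular $u^\alpha\in BV(\Omega)$ for $\alpha=\max(1,\gamma)>0$. Hence the hypotheses of Theorem \ref{teo_reg} are met, and that theorem upgrades the solution to $u\in BV(\Omega)\cap L^\infty(\Omega)$. This gives the first assertion.

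For the uniqueness assertion, assume $h$ is decreasing and $f>0$ in $\Omega$, and suppose $u_1,u_2\in BV(\Omega)$ are two solutions with associated fields $z_1,z_2$. I would follow the standard comparison argument for $1$-Laplace type problems (as in \cite[Theorem $3.5$]{DGOP} or \cite[Theorem $3.4$]{lops}): one tests the equation for $u_1$ with an admissible function built from $S_\delta(u_1-u_2)$ (or $\operatorname{sign}$-type truncations thereof), does the symmetric operation with the roles of $u_1$ and $u_2$ interchanged, and subtracts. Using Remark \ref{este} to enlarge the class of test functions to $BV(\Omega)\cap L^\infty(\Omega)$, the Green formula \eqref{green}, the boundary information \eqref{def:bordo}–\eqref{p27}, and the fact that $(z_i,DT_k(u_i))=|DT_k(u_i)|$ while $|(z_j,DT_k(u_i))|\le|DT_k(u_i)|$, the principal-part contributions combine into a nonnegative quantity. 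On the other side, the monotonicity of $h$ forces the lower-order difference $\big(h(u_1)-h(u_2)\big)\big(u_1-u_2\big)f$ to have the opposite sign on $\{u_1\ne u_2\}$. Since $f>0$ a.e., one concludes $u_1=u_2$ a.e. in $\Omega$.

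The main obstacle I expect is \emph{not} in the abstract scheme but in the careful handling of the boundary terms and of the singularity of $h$ at the origin within the uniqueness argument: the test functions must be chosen nonpositive with derivative mimicking that of the solution (the device highlighted in the Introduction) so that the singular term $h(u)f$ against them remains integrable near $\{u=0\}$, and one must verify that the truncation-and-pass-to-the-limit step ($\delta\to0$, then $k\to\infty$) does not lose the boundary contribution coming from \eqref{def:bordo}. Once the admissibility of these test functions is secured — which is essentially already done in \cite{DGOP,lops} — everything reduces to the sign-chasing above. The existence half is comparatively soft, being a direct citation of \cite{DGOP} plus the already-proved Theorem \ref{teo_reg}.
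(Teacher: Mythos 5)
Your proposal is correct and follows essentially the same route as the paper: the paper likewise invokes the existence results of \cite{DGOP} (Theorems 3.3--3.4 there), which already provide a bounded solution with $u^{\max(1,\gamma)}\in BV(\Omega)$, then applies Theorem \ref{teo_reg} (needed only when $\gamma>1$) to conclude $u\in BV(\Omega)$, and cites \cite[Theorem 3.5]{DGOP} for uniqueness — the comparison argument you sketch is precisely the content of that cited result.
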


We first prove the regularity Theorem \ref{teo_reg}.

\begin{proof}[Proof of Theorem \ref{teo_reg}] We split the proof into two steps. 

{\bf Step 1.} We first show that $u$ belongs to $L^\infty(\Omega)$. Let $\ell>k>0$ and let us observe that $G_k(T_\ell(u))\in BV(\Omega)$. Indeed, as 
\begin{equation*}
	H_{k,\ell}(s):=
	\begin{cases}
		0 & s<k^\alpha,\\
		s^{\frac{1}{\alpha}}-k & k^\alpha\leq s\leq \ell^\alpha,\\[0.5mm]
		\ell-k & s> \ell^\alpha,
	\end{cases}
\end{equation*}
is a Lipschitz function, then $u^\alpha\in BV(\Omega)$ implies $G_k(T_\ell(u))=H_{k,\ell}(u^\alpha)\in BV(\Omega)$.

Recalling Remark~\ref{este}, we take $G_k(T_\ell(u))\in BV(\Omega)\cap L^\infty(\Omega)$ as test function in \eqref{eq:PbMain}, we use the   H\"{o}lder and the Sobolev inequalities, \bk  to get
\begin{equation}\label{fpos1}
	\begin{aligned}
-\int_\Omega G_k(T_\ell(u)) \operatorname{div} z &= \int_\Omega h(u)f G_k(T_\ell(u)) \leq h_k(\infty) \|f\|_{L^{N}(\Omega)} \|G_k(T_\ell(u))\|_{L^{\frac{N}{N-1}}(\Omega)} 
\\
&\leq h_k(\infty) \|f\|_{L^{N}(\Omega)} \mathcal{S}_1 \left(\int_\Omega |DG_k(T_\ell(u))| + \int_{\partial\Omega} G_k(T_\ell(u))\, d \mathcal{H}^{N-1}\right).
	\end{aligned}
\end{equation}

Now observe that it follows from \eqref{def:bordo} (also recall Remark \ref{remarkDefinizione}) that $G_k(T_\ell(u))(1+[z,\nu])=0$ $\mathcal{H}^{N-1}$-$\mathrm{a.e.}$ on $\partial\Omega$, that is, $-G_k(T_\ell(u)) [z,\nu] =  G_k(T_\ell(u))$ $\mathcal{H}^{N-1}$-a.e. on $\partial\Omega$. Therefore, using \eqref{green}, \eqref{des2} and Lemma \ref{lem:Composition}   we obtain that the left-hand of \eqref{fpos1} can be written as
	\begin{equation}\label{fpos2}
		\begin{aligned}
	-\int_\Omega G_k(T_\ell(u)) \operatorname{div}z &= \int_\Omega (z, DG_k(T_\ell(u)) ) - \int_{\partial\Omega} G_k(T_\ell(u)) [z,\nu] \, d\mathcal H^{N-1}
	\\
	&= \int_\Omega |DG_k(T_\ell(u))| + \int_{\partial\Omega} G_k(T_\ell(u)) \, d \mathcal{H}^{N-1}.
	\end{aligned}
	\end{equation}
	Hence, gathering \eqref{fpos2} into \eqref{fpos1}, one yields to
	\begin{equation}\label{stimaGK}
	(1-h_k(\infty) \|f\|_{L^N(\Omega)} \mathcal{S}_1) \left(\int_\Omega |DG_k(T_\ell(u))| + \int_{\partial\Omega} G_k(T_\ell(u))\, d \mathcal{H}^{N-1}\right) \leq 0.
	\end{equation}
	
	Then, as \eqref{cond1} holds, we can fix  $k= \tilde{k}$ large enough such that $1-h_{\tilde{k}}(\infty) \|f\|_{L^N(\Omega)} \mathcal{S}_1>0$, and we deduce that
	\begin{equation*} 
	\int_\Omega |DG_{\tilde{k}}(T_\ell(u))| + \int_{\partial\Omega} G_{\tilde{k}}(T_\ell(u))\,d \mathcal{H}^{N-1}  = 0.
	\end{equation*}

This gives us that $\|G_{\tilde{k}}(T_\ell(u)) \|_{BV(\Omega)}=0$ and then $G_{\tilde{k}}(T_\ell(u))=0$ $\mathrm{a.e.}$ in $\Omega$ for some $\ell>\tilde{k}$ which can be seen as fixed. Therefore $\|u\|_{L^\infty(\Omega)} \leq \tilde{k}$.

\medskip

{\bf Step 2.} Let us prove that $u\in BV(\Omega)$. 
First observe that $v_n=\left(u^\alpha + \frac{1}{n} \right)^\frac{1}{\alpha} - \|u\|_{L^\infty(\Omega)}-1 \in BV(\Omega)\cap L^\infty(\Omega)$ is nonpositive for $n$ large enough and it satisfies 
$$
\|v_n\|_{L^\infty(\Omega)}\leq \|u\|_{L^\infty(\Omega)} +1\,.  
$$
Then taking $v_n$ as test function in \eqref{eq:PbMain} one can drop the right-hand side and,   after applying \eqref{green} and \eqref{des2}, one has
\[
\int_\Omega (z,Dv_n) \leq \int_{\partial\Omega} v_n[z,\nu]\, d\mathcal H^{N-1} \leq (\|u\|_{L^\infty(\Omega)} + 1) \mathcal{H}^{N-1}(\partial\Omega).
\]

As $(z,Dv_n) = |Dv_n|$ by Lemma \ref{lem:Composition}, we get
\[
\int_\Omega |Dv_n| \leq (\|u\|_{L^\infty(\Omega)} + 1)\mathcal{H}^{N-1}(\partial\Omega).
\]

Due to $v_n\to u-\|u\|_{L^\infty(\Omega)}-1$ in $L^1(\Omega)$ by the Lebesgue Theorem, using the lower semicontinuity we deduce
\begin{equation}\label{stimaBV}
\int_\Omega |Du| = \int_\Omega |D(u-\|u\|_{L^\infty(\Omega)}-1)| \leq (\|u\|_{L^\infty(\Omega)} + 1)\mathcal{H}^{N-1}(\partial\Omega)
\end{equation}
which implies that $u\in BV(\Omega)$. This concludes the proof. \bk
\end{proof}

Now we are in position to prove the existence Theorem \ref{teo_ex}.

\begin{proof}[Proof of Theorem \ref{teo_ex}]
	 First observe that \cite[Theorems $3.3$ and $3.4$]{DGOP} give that there exists a bounded solution $u$ to \eqref{eq:PbMain} in the sense of Definition \ref{def_fpos} and such that $u^{\max(1,\gamma)}\in BV(\Omega)$. Moreover, if $\gamma>1$, Theorem \ref{teo_reg} implies that $u\in BV(\Omega)$.  Uniqueness follows by applying \cite[Theorem 3.5]{DGOP}. This concludes the proof.  
\end{proof}

\section{A nonnegative datum $f$}
\label{sec_fnon}

Here we deal with the case of a nonnegative $f\in L^N(\Omega)$; in particular we show that there exists a global $BV$ solution to \eqref{eq:PbMain} under the natural smallness condition on $f$ for any $\gamma>0$.
\medskip

The main difference with respect to the case of a positive $f$ is that the solutions are not necessarily expected to be positive.
Following \cite{DGOP} we  precise what is a solution to \eqref{eq:PbMain} in the significant case in which  $f$ is  nonnegative and $h(0)=\infty$. Otherwise, if $h(0)<\infty$, the results of Section \ref{sec_pos}  apply \bk using Definition \ref{def_fpos}.
\medskip

 As in the previous section, we are controlling the growth of $h$ near zero by hypothesis~\eqref{eq:hyp_h}. For the rest of the paper, we set following notation:\bk
\begin{equation}\label{sigma}
	\sigma:=\max(1,\gamma).
\end{equation}

\begin{defin}\label{def_fnon} 
	Let $0\leq f\in L^N(\Omega)$. A nonnegative function $u\in BV_{\rm loc}(\Omega)$ with $ T_k(u)^\sigma\in BV(\Omega)$ and $\chi_{\{u>0\}} \in BV_{\rm loc}(\Omega)$ is a solution to problem~\eqref{eq:PbMain} if $h(u)f\in L^{1}_{\rm loc} (\Omega)$ and if there exists $z\in \mathcal{D}\mathcal{M}^\infty_{\rm loc}(\Omega)$ with $\|z\|_{L^\infty(\Omega)^N}\le 1$ such that
	\begin{gather}
		\label{def:distrp=1non}
		-(\operatorname{div}z) \chi^*_{\{u>0\}} = h(u)f \ \ \text{as measures in }\Omega, \\[1mm]
		\label{def:zp=1non}
		(z,DT_k(u))=|DT_k(u)| \ \ \text{as measures in } \Omega \text{ (for every } k>0),\\[1mm]
		\label{def:bordonon}
		 T_k(u)^\sigma(x)+[T_k(u)^\sigma z,\nu](x)= 0 \ \ \ \text{for  $\mathcal{H}^{N-1}$-a.e. } x \in \partial\Omega.
	\end{gather}
\end{defin}
\begin{remark}
Once again, as we want to derive the  boundedness of the solution as a regularity property, the request $T_k(u)^\sigma \in BV(\Omega)$ is natural in order to give sense to the boundary condition since, as $z$ only belongs to $\DM_{\rm{loc}}(\Omega)$,   then  $u^{\sigma}z$ may have no trace a priori and equality $[T_k(u)^\sigma z,\nu] = T_k(u)^\sigma [z,\nu]$ is not  satisfied in general.  It is worth mentioning that $T_k(u)^\sigma \in BV(\Omega)$ and equality~\eqref{def:distrp=1non} are enough to show that $(T_k(u)^\sigma)^* \in L^1(\Omega, \operatorname{div}z)$ and then the existence of a normal trace on $\partial\Omega$ for $T_k(u)^\sigma z$ is just a consequence of Proposition~\ref{poiu}.  
Finally observe that  $h(u)f\in L^{1}_{\rm loc} (\Omega)$ is needed since it is not immediately implied from \eqref{def:distrp=1non}. 
\triang
\end{remark}

\begin{remark}
	We also  observe that a solution in the sense of Definition \ref{def_fnon} is also a solution in the sense of Definition \ref{def_fpos} provided  $f$ is positive. Indeed, as $h(0)=\infty$ and $h(u)f \in L^1_{\rm loc}(\Omega)$, one has that $u>0$ in $\Omega$, and thus  from \eqref{def:distrp=1non} one gets \eqref{def:distrp=1}. Moreover, as one can apply~\cite[Lemma $2.3$]{gop}, $z\in \DM(\Omega)$. Then, thanks also to \eqref{des2}, the boundary condition \eqref{def:bordo} is satisfied. 
 \triang
\end{remark}

In the next result we show the existence of a global bounded variation solution to \eqref{eq:PbMain}.

\begin{theorem}\label{teo_exnon}
	Let $h$ satisfy \eqref{eq:hyp_h}  such that $h(0)=\infty$, and   let $0\leq f\in L^N(\Omega)$ be such that \begin{equation}\label{cond2}\|f\|_{L^N(\Omega)} < \left(\mathcal{S}_1h(\infty)\right)^{-1}.\end{equation} Then there exists a solution $u \in BV(\Omega)\cap L^\infty(\Omega)$ to \eqref{eq:PbMain}.
\end{theorem}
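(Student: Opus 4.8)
The plan is to proceed by approximation, regularizing both the singularity of $h$ at the origin and the datum $f$, and then to pass to the limit exploiting the uniform estimates that the smallness condition \eqref{cond2} provides. First I would introduce, for $n\in\mathbb{N}$, the truncated problems
\begin{equation*}
	\begin{cases}
		\dis -\Delta_1 u_n = h_n(u_n)f_n & \text{in}\;\Omega,\\
		u_n=0 & \text{on}\;\partial\Omega,
	\end{cases}
\end{equation*}
where $f_n:=T_n(f)$ and $h_n$ is a suitable bounded continuous modification of $h$ with $h_n(s)\le h(s)$ for $s$ away from zero and $h_n$ no longer singular (for instance $h_n(s):=h(s+\tfrac1n)\wedge n$, keeping $\limsup_{s\to\infty}h_n(s)\le h(\infty)$). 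Since $h_n$ is bounded and $f_n\in L^\infty(\Omega)$, the nonnegative datum $h_n(u_n)f_n$ is handled by the existence theory for the $1$-Laplacian with bounded data (as in \cite{DGOP, lops}), giving a solution $u_n\in BV(\Omega)\cap L^\infty(\Omega)$ together with a vector field $z_n\in\mathcal{DM}^\infty(\Omega)$, $\|z_n\|_\infty\le 1$, satisfying $-\operatorname{div}z_n=h_n(u_n)f_n$, $(z_n,Du_n)=|Du_n|$, and the boundary condition $u_n(1+[z_n,\nu])=0$ on $\partial\Omega$.

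Next I would derive the crucial uniform bound. Testing the approximate equation with $u_n$ itself (legitimate since $u_n\in BV\cap L^\infty$), using \eqref{green}, \eqref{des2}, Lemma~\ref{lem:Composition} and the boundary condition exactly as in \eqref{fpos1}--\eqref{stimaGK}, and then Hölder and the Sobolev inequality, one gets
\begin{equation*}
	(1-h_{k}(\infty)\,\|f\|_{L^N(\Omega)}\,\mathcal{S}_1)\left(\int_\Omega |DG_k(u_n)| + \int_{\partial\Omega}G_k(u_n)\,d\mathcal{H}^{N-1}\right)\le C_k,
\end{equation*}
where $C_k$ bounds the contribution of the truncated part $T_k(u_n)$ (whose gradient and trace are controlled since $h_n(u_n)f_n$ is locally $L^1$ with the weak estimate $\int_\Omega|DT_k(u_n)|\le$ bounded, by testing with $T_k(u_n)$). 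Choosing $k$ large enough that the bracketed coefficient is positive — possible precisely because of \eqref{cond2} and $h_k(\infty)\downarrow h(\infty)$ — yields a uniform bound on $\|u_n\|_{BV(\Omega)}$; combined with an $L^\infty$ bound obtained as in Step~1 of the proof of Theorem~\ref{teo_reg} (the nonpositive test function argument of Step~2, using $v_n=u_n-\|u_n\|_\infty-1$, gives $\int_\Omega|Du_n|\le(\|u_n\|_\infty+1)\mathcal{H}^{N-1}(\partial\Omega)$, while the $G_k$ estimate gives $\|u_n\|_\infty\le\tilde k$ uniformly). Thus $u_n$ is bounded in $BV(\Omega)\cap L^\infty(\Omega)$ uniformly in $n$, and $z_n$ is bounded in $L^\infty(\Omega)^N$.

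Then I would extract limits: $u_n\to u$ in $L^1(\Omega)$ and a.e. (by $BV$ compactness), $u\in BV(\Omega)\cap L^\infty(\Omega)$ by lower semicontinuity, and $z_n\rightharpoonup z$ weakly-$*$ in $L^\infty(\Omega)^N$ with $\|z\|_\infty\le 1$. Passing to the limit in $-\operatorname{div}z_n=h_n(u_n)f_n$ requires identifying the right-hand side: on the set $\{u>0\}$ one has $h_n(u_n)\to h(u)$ by continuity of $h$, $f_n\to f$, and a Fatou-type argument plus the local $L^1$ bound give $h(u)f\in L^1_{\rm loc}(\Omega)$ and $-(\operatorname{div}z)\chi^*_{\{u>0\}}=h(u)f$; one also checks $\chi_{\{u>0\}}\in BV_{\rm loc}(\Omega)$ and that $h(u)f$ charges no part of $\{u=0\}$. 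The relation $(z,DT_k(u))=|DT_k(u)|$ passes to the limit by the standard lower semicontinuity / Anzellotti argument (using $(z_n,DT_k(u_n))=|DT_k(u_n)|$, weak convergence of the pairings, and $\int_\Omega|DT_k(u)|\le\liminf\int_\Omega|DT_k(u_n)|$ together with $\int_\Omega(z,DT_k(u))\ge\limsup$ via the Green formula). Finally the boundary condition \eqref{def:bordonon} is recovered from the trace estimate \eqref{p27} applied to $T_k(u_n)^\sigma z_n$, together with the uniform bound on $\int_{\partial\Omega}T_k(u_n)^\sigma$ and strong $L^1(\partial\Omega)$ convergence of traces (which holds along a subsequence once the $BV$ bound is uniform).

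I expect the main obstacle to be the passage to the limit in the right-hand side near the free boundary $\{u=0\}$ and the correct handling of the term $(\operatorname{div}z)\chi^*_{\{u>0\}}$: since $z$ is only a local $\mathcal{DM}^\infty$ field and $h(0)=\infty$, one must argue carefully — as in \cite{DGOP} — that the concentration of $h_n(u_n)f_n$ does not escape, using the local uniform $L^1$ bound obtained by testing with functions supported away from $\{u=0\}$ and the fact that $h(u)f\in L^1_{\rm loc}(\Omega)$ forces $u>0$ a.e. on $\{f>0\}$. The secondary delicate point is establishing the $C_k$ bound uniformly, i.e. the local energy estimate on the truncations $T_k(u_n)$, which is where the structure of the $1$-Laplacian (no coercivity, only the constraint $\|z_n\|_\infty\le1$) must be used in place of a direct Sobolev estimate.
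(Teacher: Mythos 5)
Your overall architecture (approximate, derive uniform $L^\infty$ and $BV$ bounds from the smallness condition \eqref{cond2} via the $G_k$ test and a nonpositive test function, then pass to the limit) matches the paper's, and your a priori estimates are essentially correct --- indeed the paper gets something cleaner in Step 1: testing with $G_k(u_n)$ alone, the \emph{entire} right-hand side $\int_\Omega h(u_n)f_n G_k(u_n)$ lives on $\{u_n>k\}$ and is absorbed, so the bracketed inequality holds with $0$ in place of your $C_k$, yielding directly $\|G_{\tilde k}(u_n)\|_{BV(\Omega)}=0$ and hence the uniform $L^\infty$ bound; your detour through $C_k$ is unnecessary. The genuine gap lies in your choice of approximation and in the limit passage. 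The paper sets $f_n:=\max(f,\tfrac1n)$ and keeps $h$ \emph{singular}: since $f_n>0$ and $h(0)=\infty$, Theorem~\ref{teo_ex} applies and the approximate solutions $u_n$ are strictly positive, the approximate equation holds without any characteristic function, and --- crucially --- the structural bound $h(s)s^\gamma\le c_1$ near $s=0$ from \eqref{eq:hyp_h} controls the right-hand side when one tests with $u_n^\gamma\varphi$. Your scheme (bounded $h_n$, merely nonnegative $f_n=T_n(f)$) forfeits positivity of $u_n$, so the set $\{u_n=0\}$ is already present at the approximate level and the identification of $-(\operatorname{div}z)\chi^*_{\{u>0\}}=h(u)f$ in \eqref{def:distrp=1non} becomes harder, not easier; you flag this as ``the main obstacle'' but offer no resolution beyond citing \cite{DGOP}, whereas in the paper this step is the actual content of the proof (the $S_\delta(u_n)\varphi$ test, the mollification of $\chi_{\{u>0\}}$ via Proposition~\ref{moll}, and the two opposite inequalities).

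The second concrete gap is your treatment of \eqref{def:zp=1non}. The ``standard lower semicontinuity / Anzellotti argument'' you invoke only gives $\int_\Omega\varphi\,|DT_k(u)|\le\liminf_n\int_\Omega\varphi\,(z_n,DT_k(u_n))$; since $(z,DT_k(u))\le|DT_k(u)|$ always holds from $\|z\|_{L^\infty(\Omega)^N}\le1$, what is needed is the \emph{reverse} bound $\limsup_n\int_\Omega\varphi\,(z_n,DT_k(u_n))\le\int_\Omega\varphi\,(z,Du^\gamma$-type pairing$)$, and this does not follow from weak-$*$ convergence of $z_n$. In the paper it is obtained by testing with $u_n^\gamma\varphi$, passing to the limit using $h(s)s^\gamma\le c_1$ and the Lebesgue theorem, and then invoking the measure inequality $h(u)u^\gamma f\le-(u^\gamma)^*\operatorname{div}z$ (itself proved by mollification), before applying Lemma~\ref{lem:Composition} and Remark~\ref{rem:Composition} to come back from $u^\gamma$ to $u$. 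Without this mechanism --- or a substitute adapted to your bounded $h_n$ --- your proof of \eqref{def:zp=1non}, and consequently of the boundary condition \eqref{def:bordonon}, does not close.
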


\begin{proof}
	First observe that, if $\gamma\le 1$, the existence of a bounded solution belonging to $BV(\Omega)$ follows from \cite[Theorem 6.4]{DGOP}. Hence, from here on, we suppose $\gamma>1$.
	\medskip
	
	Let us work by approximation through the following problems 
	\begin{equation}
		\label{eq:PbApprox}
		\begin{cases}
			\dis -\Delta_1 u_n = h(u_n)f_n & \text{in}\;\Omega,\\
			u_n=0 & \text{on}\;\partial\Omega,
		\end{cases}
	\end{equation}
	where $f_n:= \max (f, \frac{1}{n})$. Observe that $f_n\geq \frac{1}{n}$ almost everywhere in $\Omega$ and that $f_n\to f$ in $L^N(\Omega)$ so that there exists some $n_0\in \mathbb{N}$ such that $\|f_n\|_{L^N(\Omega)} < (\mathcal{S}_1h(\infty))^{-1}$ for every $n\geq n_0$ (recall \eqref{cond2}).
	Therefore, for $n\geq n_0$, the existence of a positive solution $u_n\in BV(\Omega)\cap L^\infty(\Omega)$ to \eqref{eq:PbApprox} follows from Theorem \ref{teo_ex}. We denote by $z_n$ the vector field associated to $u_n$. From now on, we will always consider $n\geq n_0$ even if it is not specified.
	\medskip 	
	\textbf{Step 1:} $u_n$ is bounded in $L^\infty(\Omega)$ with respect to $n$.
	
	Taking $G_k(u_n)\in BV(\Omega)\cap L^\infty(\Omega)$ ($k>0$) as test function in \eqref{eq:PbApprox} and arguing as for obtaining \eqref{stimaGK}, one has 
	\[
	(1-h_k(\infty) \|f_n\|_{L^N(\Omega)} \mathcal{S}_1)\left(\int_\Omega |DG_k(u_n)| + \int_{\partial\Omega} G_k(u_n)\, d \mathcal{H}^{N-1}\right) \leq 0.
	\]
	Since $\|f_n\|_{L^N(\Omega)}$ is a  non-increasing \bk sequence and $\|f_{n_0}\|_{L^N(\Omega)} < (\mathcal{S}_1h(\infty))^{-1}$, one can fix $k=\tilde{k}$ large enough so that $\|f_{n}\|_{L^N(\Omega)} < (\mathcal{S}_1 h_{\tilde{k}}(\infty))^{-1}$ for all $n\geq n_0$. Therefore, one gains 
	\[
	\int_\Omega |DG_{\tilde{k}}(u_n)| + \int_{\partial\Omega} G_{\tilde{k}}(u_n)\, d \mathcal{H}^{N-1} = 0,
	\]
	namely $\|G_{\tilde{k}}(u_n)\|_{BV(\Omega)} = 0$. This implies that $G_{\tilde{k}}(u_n)=0$ in $\Omega$ and thus $\|u_n\|_{L^\infty(\Omega)}\leq {\tilde{k}}$ for all $n\ge n_0$.
	
	\medskip 	

	\textbf{Step 2:} $u_n$ is bounded in $BV(\Omega)$ with respect to $n$. 
	
	Here one can reason as for deducing \eqref{stimaBV}; hence, after taking $v_{n,\varepsilon}=\left(u_n^\gamma + \varepsilon \right)^\frac{1}{^\gamma} - \|u_n\|_{L^\infty(\Omega)}-1 \in BV(\Omega)\cap L^\infty(\Omega)$ ($\varepsilon$ small enough) as test function into \eqref{eq:PbApprox} and reasoning analogously to the Step 2 of Theorem \ref{teo_ex}, one yields to
	\[
	\int_\Omega |Dv_{n,\varepsilon}| \leq (\|u_n\|_{L^\infty(\Omega)} + 1) \mathcal{H}^{N-1}(\partial\Omega).
	\]
	Since $v_{n,\varepsilon} \to u_n-\|u_n\|_{L^\infty(\Omega)}-1$ in $L^1(\Omega)$ by the Lebesgue Theorem as $\varepsilon \to 0$, using the lower semicontinuity one has
	\[
	\int_\Omega |Du_n| = \int_\Omega |D(u_n-\|u_n\|_{L^\infty(\Omega)}-1)| \leq (\|u_n\|_{L^\infty(\Omega)} + 1)\mathcal{H}^{N-1}(\partial\Omega) \leq C,
	\]
	since it follows from Step $1$ that $u_n$ is bounded in $L^\infty(\Omega)$ with respect to $n$. Thus, this shows that $u_n$ is bounded in $BV(\Omega)$ with respect to $n$.
	
	As $u_n$ is bounded in $BV(\Omega)$, there exists a subsequence, still denoted by $u_n$,  such that $u_n\rightarrow u$  in $L^1(\Omega)$. and $\mathrm{a.e.}$ in $\Omega$ for some $u\in BV(\Omega)$. Since $u_n$ is uniformly bounded in $L^\infty(\Omega)$, we also have $u\in L^\infty(\Omega)$ and $u_n\to u$ in $L^q(\Omega)$ for every $q<\infty$.\bk 
	
	Furthermore, since $\|z_n\|_{L^\infty(\Omega)^N}\leq 1$, then there exists $z\in L^\infty(\Omega)^N$ such that $z_n\rightharpoonup z$ *-weakly in $L^\infty(\Omega)^N$. Since the norm is *-weakly lower semicontinuous, then we have also $\|z\|_{L^\infty(\Omega)^N}\leq 1$.
	
	\medskip 	
	\textbf{Step 3:} $h(u)f\in L^1_\mathrm{loc}(\Omega)$.

	Taking $0\leq \varphi \in C_c^1(\Omega)$ as test function in \eqref{eq:PbApprox}, one gets that
	\begin{equation*}
		\int_\Omega z_n\cdot \nabla \varphi= \int_\Omega h(u_n)f_n\varphi.
	\end{equation*}
	Then one can pass to the limit with respect to $n$ the previous; in particular,  using that $z_n\rightharpoonup z$ *-weakly in $L^\infty(\Omega)^N$  on the left-hand,  and the Fatou Lemma on the right-hand, one obtains
	\begin{equation}
		\label{eq:Proof_f>=0_1}
		\int_\Omega h(u)f\varphi \leq \int_\Omega z\cdot \nabla \varphi = - \int_\Omega \varphi \operatorname{div}z,\ \forall \varphi \in C_c^1(\Omega) \text{ with }\varphi \ge 0.
	\end{equation}
	This implies that $h(u)f\in L^1_\mathrm{loc}(\Omega)$ and then $\{u=0\}\subseteq \{f=0\}$ up to a set of zero Lebesgue measure.  From this inequality it can also be deduced that $z\in \DM_{\mathrm{loc}}(\Omega)$.  
	
	\medskip 	
	\textbf{Step 4:} Proof of \eqref{def:distrp=1non}.
	
	We take $S_\delta(u_n)\varphi \in BV(\Omega)\cap L^\infty(\Omega)$ ($S_\delta$ is defined in \eqref{Sdelta}) with $0\leq \varphi \in C_c^1(\Omega)$ as test function in \eqref{eq:PbApprox} obtaining (recall \eqref{dist1})
	\begin{equation}
		\label{eq:Proof_f>=0_2}
		\int_\Omega (z_n, DS_\delta(u_n)) \varphi + \int_\Omega z_n \cdot \nabla \varphi \ S_\delta(u_n)= \int_\Omega h(u_n)f_n S_\delta(u_n) \varphi.
	\end{equation}
	Since $S_\delta(s)$ is a non-decreasing function with respect to  $s$ and since $(z_n,Du_n) = |Du_n|$ as measures in $\Omega$, it follows from Lemma \ref{lem:Composition} that $(z_n,DS_\delta(u_n)) = |DS_\delta(u_n)|$ as measures in $\Omega$. 
	
	Now, we want to pass first to the limit in $n$. Since $S_\delta(u_n)\to S_\delta(u)$ in $L^1(\Omega)$, in the first term we use the lower semicontinuity. In the second term we can easily pass to the limit since $z_n\to z$ *-weakly in  $L^\infty(\Omega)^N$ \bk and $S_\delta(u_n)\to S_\delta(u)$ in $L^1(\Omega)$. Finally, in the last term we can use the Lebesgue Theorem since
	\[
	h(u_n)f_n S_\delta(u_n) \varphi \leq \sup_{s\in[\delta,\infty)} h(s) \ (f+1) \varphi \in L^1(\Omega).
	\]
	Therefore, passing to the limit with respect to $n$ into \eqref{eq:Proof_f>=0_2}, one deduces
	\begin{equation*}
		\int_\Omega |DS_\delta(u)| \varphi + \int_\Omega z \cdot \nabla \varphi \ S_\delta(u) \leq \int_\Omega h(u)f S_\delta(u) \varphi.
	\end{equation*}
	
	Now we pass to the limit the previous in $\delta$. Due to $\|z\|_{L^\infty(\Omega)^N}\leq 1$ and~\eqref{eq:Proof_f>=0_1}, we can use the Lebesgue Theorem to pass to the limit in the last two terms. Since $S_\delta(u)\to \chi_{\{u>0\}}$ in $L^1(\Omega)$, we can use the lower semicontinuity on the first term to obtain
	\begin{equation}
		\label{eq:Proof_f>=0_4}
		\int_\Omega |D\chi_{\{u>0\}}| \varphi + \int_\Omega z \cdot \nabla \varphi \ \chi_{\{u>0\}} \leq \int_\Omega h(u)f \chi_{\{u>0\}} \varphi.
	\end{equation}
	In particular, we also have 
	\[
	\chi_{\{u>0\}}\in BV_\mathrm{loc}(\Omega).
	\]
	
	Now observe that  $(z, D\chi_{\{u>0\}}) \le  |D\chi_{\{u>0\}}|$ since $\|z\|_{L^\infty(\Omega)^N}\le 1$, then  from~\eqref{eq:Proof_f>=0_4} and from \eqref{dist1}, one deduces 
	\begin{equation}
		\label{eq:Proof_f>=0_5}
		- \int_\Omega \varphi \chi^*_{\{u>0\}} \operatorname{div} z \leq \int_\Omega h(u)f \chi_{\{u>0\}} \varphi \leq \int_\Omega h(u)f \varphi.
	\end{equation} \bk
	
	To prove the reverse inequality, we take $\varphi = (\chi_{\{u>0\}} * \rho_\varepsilon)\phi$ with $0\leq \phi \in C^1_c(\Omega)$ in~\eqref{eq:Proof_f>=0_1}, where $\rho_\varepsilon$ is a mollifier. Using the Lebesgue Theorem in both sides (recall that $h(u)f\in L_\mathrm{loc}^1(\Omega)$), we can pass to the limit in $\varepsilon$ to get, taking into account Proposition~\ref{moll}, that
	\begin{equation*}
		\int_\Omega h(u)f \chi_{\{u>0\}} \phi \leq - \int_\Omega \phi \chi^*_{\{u>0\}}\operatorname{div}z.
	\end{equation*}
	Since $\{u=0\}\subseteq \{f=0\}$, we have 
	\begin{equation}
		\label{eq:Proof_f>=0_6}
		\int_\Omega h(u)f \phi \leq - \int_\Omega \phi \chi^*_{\{u>0\}}\operatorname{div}z, \text{ with }\varphi\ge 0.
	\end{equation}
	Joining~\eqref{eq:Proof_f>=0_5} and~\eqref{eq:Proof_f>=0_6} we conclude that 
	\begin{equation*}
		-\int_\Omega \varphi \chi^*_{\{u>0\}} \operatorname{div} z = \int_\Omega h(u)f \varphi,\ \forall \varphi\in C_c^1(\Omega), \text{ with }\varphi\ge 0,
	\end{equation*} \bk
	which gives \eqref{def:distrp=1non}.
	
	\medskip
	
	\textbf{Step 5:} Proof of \eqref{def:zp=1non}.
	
	Let us take $u_n^\gamma \varphi \in BV(\Omega)\cap L^\infty(\Omega)$ with $0\leq \varphi\in C_c^1(\Omega)$ as test function in \eqref{eq:PbApprox} to deduce
	\[
	-\int_\Omega u_n^\gamma \varphi\operatorname{div} z_n = \int_\Omega h(u_n) u_n^\gamma f_n \varphi.
	\]
Then it follows from \eqref{dist1} that it holds
	\begin{equation}
		\label{eq:Proof_f>=0_7}
		\int_\Omega (z_n,Du_n^\gamma) \varphi + \int_\Omega u_n^\gamma\, z_n\cdot \nabla\varphi = \int_\Omega h(u_n) u_n^\gamma f_n \varphi,
	\end{equation}
	and we want to pass to the limit the previous in $n$.
	
	Now observe that Lemma \ref{lem:Composition} gives that $(z_n,Du_n^\gamma)=|Du_n^\gamma|$ holds as measures in $\Omega$. Moreover, the second term converges   since $z_n\rightharpoonup z$ $*$-weakly in $L^\infty(\Omega)^N$ and $u_n^\gamma\to u^\gamma$ in $L^1(\Omega)$. Observe also that the term on the right-hand of \eqref{eq:Proof_f>=0_7} converges since $u_n$ is bounded in $L^\infty(\Omega)$ with respect to $n$ and $h(s)s^\gamma \le c_1$ for $s\le s_1$ so that one can apply the Lebesgue Theorem. Lastly, thanks also to the lower semicontinuity on the first term into \eqref{eq:Proof_f>=0_7}, one obtains
	\begin{equation}
		\label{eq:Proof_f>=0_8}
		\int_\Omega \varphi |Du^\gamma| \leq - \int_\Omega u^\gamma\, z\cdot \nabla\varphi + \int_\Omega h(u)u^\gamma f\varphi.
	\end{equation}
	
	Now note that, by a mollification argument in~\eqref{eq:Proof_f>=0_1}, one can also show that 
	\begin{equation}
 	\label{eq:Proof_f>=0_9}
	h(u) u^\gamma f \leq  -(u^\gamma)^* \operatorname{div}z \ \ \text{as measures in }\Omega,
	\end{equation}
	which, used into \eqref{eq:Proof_f>=0_8} and recalling \eqref{dist1}, gives that 
	\[
	\int_\Omega \varphi |Du^\gamma| \leq \int_\Omega (z,Du^\gamma)\varphi,\ \forall \varphi\in C_c^1(\Omega), \text{ with }\varphi \ge 0.
	\]
	Since the reverse inequality is immediate from $\|z\|_{L^\infty(\Omega)^N}\leq 1$, one yields to
	\begin{equation*}
		\int_\Omega \varphi |Du^\gamma| = \int_\Omega (z,Du^\gamma)\varphi,\ \forall \varphi\in C_c^1(\Omega).
	\end{equation*}

	Therefore, $(z, Du^\gamma) = |Du^\gamma|$ as measures in $\Omega$. Since $s\mapsto s^\gamma$ is increasing, an application of Remark~\ref{rem:Composition} gives both that $(z,Du)=|Du|$ as measures in $\Omega$ and  that~\eqref{def:zp=1non} holds.

	\medskip 
	\textbf{Step 6:} Proof of \eqref{def:bordonon}.
	
	Let us take $T_k(u_n)^\gamma \in BV(\Omega) \cap L^\infty(\Omega)$ ($k>0$) as test function in \eqref{eq:PbApprox} in order to get
	\[
	-\int_\Omega T_k(u_n)^\gamma \operatorname{div} z_n = \int_\Omega h(u_n) T_k(u_n)^\gamma f_n.
	\]
	
	 Reasoning as in  Remark \ref{remarkDefinizione} one readily  has  that $-T_k(u_n)^\gamma [z_n,\nu] = T_k(u_n)^\gamma$\  $\mathcal{H}^{N-1}$-a.e.   on $\partial\Omega$. Moreover, Lemma~\ref{lem:Composition} can be applied to deduce that $(z_n, DT_k(u_n)^\gamma) =  |DT_k(u_n)^\gamma| \bk$ as measures in $\Omega$. Thus, an application of~\eqref{green} gives  
	\[
	\int_\Omega |DT_k(u_n)^\gamma| + \int_{\partial \Omega} T_k(u_n)^\gamma\, d\mathcal{H}^{N-1} = \int_\Omega h(u_n) T_k(u_n)^\gamma f_n.
	\]
	Once more we can pass to the limit as $n\to\infty$ the right-hand of the previous thanks to an application of the Lebesgue Theorem. Observe that for the left-hand one can use the lower semicontinuity of the norm with respect to $n$, yielding to  
	\[\begin{aligned}
	\int_\Omega |D T_k(u)^\gamma| + \int_{\partial\Omega} T_k(u)^\gamma \, d\mathcal{H}^{N-1} &\leq \int_\Omega h(u)T_k(u)^\gamma f \leq \bk -\int_\Omega (T_k(u)^\gamma)^* \operatorname{div}z \\ &\stackrel{\eqref{green}}{=} \int_\Omega (z,DT_k(u)^\gamma) - \int_{\partial\Omega} [ T_k(u)^\gamma z,\nu]\, d\mathcal{H}^{N-1},
	\end{aligned}\]
	where the second inequality can be deduced as for~\eqref{eq:Proof_f>=0_9}.
	
	Now, since $(z,DT_k(u)^\gamma) = |DT_k(u)^\gamma|$ as measures in $\Omega$, one has
	\[
	\int_{\partial\Omega} (T_k(u)^\gamma +[T_k(u)^\gamma z,\nu])\, d\mathcal{H}^{N-1} \leq 0.
	\]
	Finally, note that $|[T_k(u)^\gamma z,\nu]|\leq {T_k(u)^\gamma}_{\res {\partial\Omega}}$ due to~\eqref{p27}, so one gets that $T_k(u)^\gamma (x)+[T_k(u)^\gamma z,\nu](x)=0$ for $\mathcal{H}^{N-1}$-a.e. $x \in \partial\Omega$.  This concludes the proof. 
\end{proof}

\section{Less regular data and  a generic nonlinearity}\label{sec5}

In this section we investigate problems whose prototype is  \eqref{eq:PbMain}  in presence of a datum $f$ which is  increasingly less regular. For the sake of generality, in Section \ref{sec5b}, we also consider the case of a  general nonlinearity $F(x,u)$.

\subsection{The case $f\in L^{N,\infty}(\Omega)$}	
\label{sec5c}

The main theorems proved in the previous sections can be  extended in a quite plain way to the case of a nonnegative data $f$ belonging to the Lorentz space  $L^{N,\infty}(\Omega)$, also called in literature Marcinkiewicz space $M^{N}(\Omega)$ or $L^{N}$-weak Lebesgue space. For an introduction to these spaces, we refer the interested reader to \cite{PKF}. 
\medskip

To our aim, it is sufficient to recall that a  H\"older inequality is available and that the conjugate space associated to $L^{p,q}(\Omega)$ for $p>1$ and $q\in [1,\infty]$ is $L^{p',q'}(\Omega)$. Moreover, a Sobolev inequality holds, that is
\begin{equation}\label{soblor}
	\|v\|_{L^{\frac{N}{N-1},1}(\Omega)}\leq \tilde{\mathcal S}_{1} \| v\|_{BV(\Omega)}, \ \ \forall v\in BV(\Omega),
\end{equation}
where $\tilde{\mathcal S}_{1}= \big[(N-1)\omega_{N}^{\frac{1}{N}} \big]^{-1}$. 

\medskip 

With \eqref{soblor} at our disposal it is straightforward to carry out the proof of an existence result \bk (in the  sense of Definition \ref{def_fnon}) similar to Theorem~\ref{teo_exnon} by following the same steps of  Section~\ref{sec_fnon} with straightforward modifications\bk. It is worth mentioning that the existence results from~\cite{DGOP}  used in the proof of Theorem \ref{teo_ex} also hold for $f\in L^{N,\infty}(\Omega)$, see~\cite[Section~7]{DGOP}. Also recalling Theorem~\ref{teo_ex} (for uniqueness),  summarizing one has the following. 

\begin{theorem} 
	Let $h$ satisfy \eqref{eq:hyp_h} and let $0\le f\in L^{N,\infty}(\Omega)$ be such that $\displaystyle ||f||_{L^{N,\infty}(\Omega)}<(\tilde{\mathcal{S}}_1 h(\infty))^{-1}$. Then there exists a solution $u\in BV(\Omega)\cap L^\infty(\Omega)$ to problem \eqref{eq:PbMain} in the  sense of Definition \ref{def_fnon}\bk.   Moreover, if $f>0$ and  $h$ is decreasing,  then $u$ is the unique solution of \eqref{eq:PbMain}  in $BV(\Omega)$.
\end{theorem}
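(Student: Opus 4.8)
The plan is to reproduce, almost word for word, the scheme developed in Section~\ref{sec_fnon} for the proof of Theorem~\ref{teo_exnon}, replacing throughout the pair formed by Hölder's inequality in $L^N(\Omega)$ and the Sobolev inequality with constant $\mathcal S_1$ by the pair formed by Hölder's inequality for the Lorentz duality $L^{N,\infty}(\Omega)$--$L^{\frac{N}{N-1},1}(\Omega)$ and the Lorentz--Sobolev inequality~\eqref{soblor} with constant $\tilde{\mathcal S}_1$. I first dispose of the cases requiring no new argument: if $h(0)<\infty$ then $h$ is bounded on $[0,\infty)$ and the statement follows along the same (in fact simpler) lines, the singular test functions $u_n^\gamma\varphi$ being then unnecessary; if $\gamma\le1$ the existence of a bounded $BV$ solution is provided by the $L^{N,\infty}$ version of~\cite[Theorem~6.4]{DGOP}, see~\cite[Section~7]{DGOP}. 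Hence I concentrate on $h(0)=\infty$ and $\gamma>1$.

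Next I set up the approximation $f_n:=\max(f,\tfrac1n)$. Since $\{f_n>t\}=\{f>t\}$ for $t\ge1/n$ and $\{f_n>t\}=\Omega$ for $t<1/n$, the distribution function of $f_n$ gives $\|f_n\|_{L^{N,\infty}(\Omega)}\le\max\bigl(\|f\|_{L^{N,\infty}(\Omega)},n^{-1}|\Omega|^{1/N}\bigr)$, so $\|f_n\|_{L^{N,\infty}(\Omega)}<(\tilde{\mathcal S}_1 h(\infty))^{-1}$ for all $n$ large; moreover $0\le f_n\le f+1\in L^1(\Omega)$ and $f_n\to f$ a.e., hence $f_n\to f$ in $L^1(\Omega)$. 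For such $n$, the $L^{N,\infty}$ analogue of Theorem~\ref{teo_ex} applies: a positive solution $u_n$ with $u_n^{\sigma}\in BV(\Omega)$ of the approximate problem with datum $f_n$ exists by~\cite[Section~7]{DGOP}, and its $BV(\Omega)\cap L^\infty(\Omega)$ regularity (for $\gamma>1$) is the analogue of Theorem~\ref{teo_reg}, whose proof uses $f$ only through the chain $\int_\Omega h(u)fG_k(T_\ell(u))\le h_k(\infty)\|f\|_{L^{N,\infty}(\Omega)}\|G_k(T_\ell(u))\|_{L^{\frac{N}{N-1},1}(\Omega)}\le h_k(\infty)\|f\|_{L^{N,\infty}(\Omega)}\tilde{\mathcal S}_1\|G_k(T_\ell(u))\|_{BV(\Omega)}$ and so carries over verbatim. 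Denote by $z_n$ the associated vector field, $\|z_n\|_{L^\infty(\Omega)^N}\le1$.

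Then I run the six steps of the proof of Theorem~\ref{teo_exnon}. In Step~1, testing with $G_k(u_n)$ and using the Lorentz Hölder/Sobolev pair, I fix $\tilde k$ so large that $\|f_n\|_{L^{N,\infty}(\Omega)}<(\tilde{\mathcal S}_1 h_{\tilde k}(\infty))^{-1}$ for every admissible $n$ (possible because $h_k(\infty)\downarrow h(\infty)$ and eventually $\|f_n\|_{L^{N,\infty}(\Omega)}\le\|f\|_{L^{N,\infty}(\Omega)}$), getting the uniform bound $\|u_n\|_{L^\infty(\Omega)}\le\tilde k$. In Step~2, the nonpositive test function $v_{n,\varepsilon}=(u_n^\gamma+\varepsilon)^{1/\gamma}-\|u_n\|_{L^\infty(\Omega)}-1$, together with $(z_n,Dv_{n,\varepsilon})=|Dv_{n,\varepsilon}|$ (Lemma~\ref{lem:Composition}), the Green formula~\eqref{green}, the trace estimate~\eqref{p27} and lower semicontinuity, gives $\int_\Omega|Du_n|\le(\tilde k+1)\mathcal H^{N-1}(\partial\Omega)$; passing to a (not relabelled) subsequence yields $u_n\to u$ in $L^1(\Omega)$ and a.e., $u\in BV(\Omega)\cap L^\infty(\Omega)$, and $z_n\rightharpoonup z$ $*$-weakly in $L^\infty(\Omega)^N$ with $\|z\|_{L^\infty(\Omega)^N}\le1$. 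Step~3 (test with $0\le\varphi\in C^1_c(\Omega)$, Fatou) gives $h(u)f\in L^1_{\mathrm{loc}}(\Omega)$, $z\in\mathcal{DM}_{\mathrm{loc}}(\Omega)$ and $\{u=0\}\subseteq\{f=0\}$ up to a null set. Steps~4, 5 and 6 --- with the test functions $S_\delta(u_n)\varphi$, $u_n^\gamma\varphi$ and $T_k(u_n)^\gamma$ respectively, and using Lemma~\ref{lem:Composition}, Remark~\ref{rem:Composition}, Proposition~\ref{moll}, formula~\eqref{green}, the bound~\eqref{p27} and the mollification argument with $\chi_{\{u>0\}}*\rho_\varepsilon$ --- yield~\eqref{def:distrp=1non}, \eqref{def:zp=1non} and~\eqref{def:bordonon} exactly as in Section~\ref{sec_fnon}; none of these steps uses the summability of $f$ beyond the $L^1_{\mathrm{loc}}$ information already obtained, so they transfer without change. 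Finally, when $f>0$ and $h$ is decreasing, uniqueness in $BV(\Omega)$ follows from~\cite[Theorem~3.5]{DGOP}, exactly as in Theorem~\ref{teo_ex}.

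The only genuinely new points, and hence the only places where something could go wrong, are (i) that the approximating data $f_n$ retain the smallness condition with the \emph{same} constant $\tilde{\mathcal S}_1$ --- handled by the elementary distribution-function estimate above --- and (ii) that Hölder's inequality in Lorentz spaces pairs $L^{N,\infty}(\Omega)$ precisely with $L^{\frac{N}{N-1},1}(\Omega)$, which matches the target space of~\eqref{soblor}. Both amount to bookkeeping once~\eqref{soblor} and the functional framework of Section~\ref{sec_prel} are at hand, so I do not foresee a serious obstacle: the whole difficulty lies in checking that the constants line up.
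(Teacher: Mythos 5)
Your proposal is correct and follows essentially the same route as the paper, which itself only states that the proof of Theorem \ref{teo_exnon} carries over ``with straightforward modifications'' once the Lorentz H\"older/Sobolev pair \eqref{soblor} replaces the $L^N$ one and the existence results of \cite[Section~7]{DGOP} are invoked. Your explicit distribution-function check that $\|f_n\|_{L^{N,\infty}(\Omega)}\le\max\bigl(\|f\|_{L^{N,\infty}(\Omega)},n^{-1}|\Omega|^{1/N}\bigr)$ retains the smallness condition is a worthwhile detail the paper leaves implicit, since strong $L^{N,\infty}$ convergence of $f_n$ to $f$ is not available as it was in the $L^N$ case.
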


\subsection{$L^m(\Omega)$ data in the model case, $1\leq m<N$} \label{sec5a}
Here we aim to investigate  the effect that the behaviour of the nonlinearity $h$ at infinity produces on the (existence and) regularity of the solutions in case of less regular data, namely  $f\in L^m(\Omega)$ for some $m\geq1$. In order  to better present the key idea, we first consider the model case
\begin{equation}
	\label{eq:PbL1}
	\begin{cases}
		\dis -\Delta_1 u = \frac{f}{u^\gamma} & \text{in}\;\Omega,\\
		u=0 & \text{on}\;\partial\Omega,
	\end{cases}
\end{equation}
with $\gamma>0$ and $f$ a positive function  belonging to $L^m(\Omega)$ with $1\leq m<N$. 
The general case is discussed in Section~\ref{sec5b} below.
\medskip

As pointed out in~\cite{lops},  when $f$ is merely integrable, a priori one can not even prove that the solution is bounded, neither in  $BV_\mathrm{loc}(\Omega)$. Then, one has to slightly re-adapt Definition~\ref{def_fpos}.

\begin{defin}\label{def_f_L1}
	Let $0< f\in L^1(\Omega)$. A nonnegative function $u$ with $T_k(u)\in BV_{\rm loc}(\Omega)$ for every $k>0$ is a solution to problem~\eqref{eq:PbL1} if there exists $z\in \mathcal{D}\mathcal{M}^\infty(\Omega)$ with $\|z\|_{L^\infty(\Omega)^N}\le 1$ such that
	\begin{equation*} 
		-\operatorname{div}z = \frac{f}{u^\gamma} \ \ \text{as measures in }\Omega,
	\end{equation*}
	\begin{equation*} 
 		(z,DT_k(u))=|DT_k(u)| \ \ \text{as measures in } \Omega  \text{ (for every } k>0),
	\end{equation*}        
	and one of the following conditions holds:
    \begin{equation}  \label{def:bordo_L1} 
    \lim_{\varepsilon\to 0^+}\fint_{\Omega\cap B(x,\epsilon)} T_k(u (y)) \, dy = 0 \ \ \ \text{or} \ \ \ [z,\nu] (x)= -1 \ \ \ \text{for  $\mathcal{H}^{N-1}$-a.e. } x \in \partial\Omega.		
	\end{equation}
\end{defin}

\begin{remark}\label{rem:bordo_L1}
    The first condition in \eqref{def:bordo_L1} in case  $T_k(u) \in BV(\Omega)$    implies that $T_k(u)=0$ \  $\mathcal{H}^{N-1}$-$\mathrm{a.e.}$ on $\partial\Omega$. Hence\bk, in this case, one has that \eqref{def:bordo_L1} is equivalent to $ T_k(u) [z,\nu]=-T_k(u)$ \ $\mathcal{H}^{N-1}$-$\mathrm{a.e.}$ on $\partial\Omega$. Also observe that if $f\in L^N (\Omega)$ then the solution found in Theorem~\ref{teo_reg} it is also a solution in the sense of  Definition \ref{def_f_L1}.
    \triang
\end{remark}

The following theorem successfully extends the results of~\cite{bo} (see also~\cite{DCA}) to the case of the 1-Laplacian operator. Broadly speaking, the idea is to divide the solution into two parts which are studied separately. As we will see, the proof gives relevant information about the effect that the nonlinearity has on the regularity of the solution.

\begin{theorem}\label{teo_ex_L1}
	Let $0<f\in L^m(\Omega)$ with $m\geq 1$. Then there exists a unique positive solution $u\in BV(\Omega)$ to \eqref{eq:PbL1} if one of the following cases occurs:
 \begin{enumerate}[i)]
     \item $\gamma<1$ and $m=\left( \frac{1^*}{1-\gamma} \right)'=\frac{N}{(N-1)\gamma + 1}$,
     \\[-2mm]
    \item $\gamma\geq 1$ and $m=1$.
 \end{enumerate}
\end{theorem}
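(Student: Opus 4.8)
The strategy is to split the solution as $u = T_1(u) + G_1(u)$ and estimate the two pieces separately, the truncation $T_1(u)$ carrying the singular behaviour of the right-hand side near $\{u=0\}$ and the tail $G_1(u)$ carrying the contribution where $h(u)=u^{-\gamma}$ is bounded. I would work by approximation through the regularized problems $-\Delta_1 u_n = f_n (u_n + \tfrac1n)^{-\gamma}$ with $f_n = \min(f,n)$ (or a truncation of $f$ from above), for which existence of a bounded solution $u_n \in BV(\Omega)$ with $T_k(u_n)^\sigma \in BV(\Omega)$ and an associated vector field $z_n$ with $\|z_n\|_\infty \le 1$ follows from the earlier sections (Theorem~\ref{teo_ex} and the results of \cite{DGOP,lops}). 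The core is then a pair of a priori estimates uniform in $n$.

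\textbf{Estimate on the tail.} For the tail I would test the approximate equation with a function of the form $\Phi(G_1(u_n))$ where $\Phi$ is built so that its derivative mimics $\big((u_n^\gamma+\varepsilon)^{1/\gamma}\big)'$ on $\{u_n>1\}$ — this is precisely the ``nonpositive test function whose derivative behaves as the derivative of the solution'' idea highlighted in the introduction, which makes the singular term harmless. After applying the Green formula \eqref{green}, the identity \eqref{des2}, Lemma~\ref{lem:Composition} to turn $(z_n,D(\cdot))$ into $|D(\cdot)|$, and the boundary condition (which forces the boundary integrand to have a favourable sign), one is left with a bound of the form $\int_\Omega |DG_1(u_n)| + \int_{\partial\Omega} G_1(u_n)\,d\mathcal H^{N-1} \le C\big(1 + \int_{\{u_n>1\}} h(u_n) u_n^\gamma f_n\big)$; since $h(s)s^\gamma = 1$ here this is just $C(1 + \|f\|_{L^1(\Omega)})$. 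Hence $G_1(u_n)$ is bounded in $BV(\Omega)$ independently of $n$.

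\textbf{Estimate on the truncation.} For $T_1(u_n)$ I would test with $T_1(u_n)^\sigma$ (with $\sigma = \max(1,\gamma)$ as in \eqref{sigma}); by Lemma~\ref{lem:Composition} and \eqref{green}, $\int_\Omega |DT_1(u_n)^\sigma| + \int_{\partial\Omega} T_1(u_n)^\sigma \,d\mathcal H^{N-1} = \int_\Omega h(u_n) T_1(u_n)^\sigma f_n$, and on $\{u_n \le 1\}$ one has $h(u_n)T_1(u_n)^\sigma = u_n^{-\gamma} u_n^\sigma \le 1$ since $\sigma \ge \gamma$, so the right-hand side is controlled by $\|f_n\|_{L^1(\Omega)}$ — \emph{only} in case~ii) ($\gamma\ge 1$, $m=1$). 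In case~i) ($\gamma<1$, $\sigma = 1$) the right-hand side is $\int_{\{u_n\le 1\}} u_n^{1-\gamma} f_n$, which one estimates by Hölder with exponents $\big(\tfrac{1^*}{1-\gamma}, m\big)$ (note $m = (\tfrac{1^*}{1-\gamma})'$ is exactly the conjugate) and then by the Sobolev inequality $\|v\|_{L^{1^*}} \le \mathcal S_1 \|v\|_{BV}$ applied to $v = T_1(u_n)$, absorbing the resulting $\|T_1(u_n)\|_{BV}^{1-\gamma}$ term by Young's inequality. Either way $T_1(u_n)$ is bounded in $BV(\Omega)$.

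\textbf{Passage to the limit and conclusion.} Combining the two estimates, $u_n$ is bounded in $BV(\Omega)$, so up to a subsequence $u_n \to u$ in $L^1(\Omega)$ and a.e., with $u \in BV(\Omega)$ by lower semicontinuity of the norm; and $z_n \rightharpoonup z$ $*$-weakly in $L^\infty(\Omega)^N$ with $\|z\|_\infty \le 1$. One then verifies the distributional equation, the relation $(z,DT_k(u)) = |DT_k(u)|$, and the boundary condition \eqref{def:bordo_L1}, following the scheme of Steps 3--6 in the proof of Theorem~\ref{teo_exnon} (lower semicontinuity for the $(z,D\cdot)=|D\cdot|$ inequalities, a mollification argument against $\chi_{\{u>0\}}$ for the reverse divergence inequality, and \eqref{p27} to pin down the sign on $\partial\Omega$); the local $L^1$ bound on $f u^{-\gamma}$ needed throughout comes from testing the approximate equation with $C^1_c(\Omega)$ functions and using Fatou. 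Uniqueness of the positive $BV$ solution follows from the comparison/uniqueness results already recalled (\cite[Theorem~3.5]{DGOP}, \cite[Theorem~3.4]{lops}), since $s\mapsto s^{-\gamma}$ is decreasing and $f>0$. The main obstacle I anticipate is the truncation estimate in case~i): getting the Hölder pairing to close exactly requires the numerology $m = \frac{N}{(N-1)\gamma+1}$, and one must be careful that the Sobolev constant and the Young-inequality absorption leave a genuinely $n$-independent bound — this is where the sharpness of the summability threshold is really used.
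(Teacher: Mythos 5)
Your overall architecture (split $u$ into a truncation plus a tail and estimate each piece by a suitable test function) is the same as the paper's, but the two key estimates are attached to the wrong pieces, and each one is broken exactly where it matters. Consider first the tail in case i). Testing with $G_1(T_\ell(u_n))$ — or with any $\Phi(G_1(u_n))$ whose derivative mimics $\bigl((s^\gamma+\varepsilon)^{1/\gamma}\bigr)'$, which is simply $\approx 1$ on $\{s>1\}$; the nonpositive‑test‑function trick is only needed near $s=0$, where $s^{\gamma-1}$ degenerates for $\gamma>1$ — produces on the right-hand side $\int_{\{u_n\ge 1\}}u_n^{-\gamma}f_n\,G_1(T_\ell(u_n))\le\int_{\{u_n\ge1\}}f_n\,u_n^{1-\gamma}$, \emph{not} $\int_{\{u_n\ge1\}}h(u_n)u_n^{\gamma}f_n=\int_{\{u_n\ge1\}}f_n$. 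For $\gamma\ge1$ one indeed has $u_n^{1-\gamma}\le1$ there and your case ii) tail bound is fine; but for $\gamma<1$ the quantity $\int f u^{1-\gamma}$ over $\{u\ge1\}$ is precisely where the critical summability enters: it must be estimated by H\"older as $\|f\|_{L^m(\Omega)}\,\|G_1(T_\ell(u_n))\|^{1-\gamma}_{L^{m'(1-\gamma)}(\Omega)}$ with $m'(1-\gamma)=1^*$, followed by Sobolev and absorption of the power $1-\gamma<1$. You spend this H\"older–Sobolev machinery instead on $\int_{\{u_n\le1\}}u_n^{1-\gamma}f_n$, which is trivially $\le\|f\|_{L^1(\Omega)}$ since $u_n\le1$ there; as written, your case i) does not close.

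Second, the truncation in case ii): from a uniform bound on $\|T_1(u_n)^{\sigma}\|_{BV(\Omega)}$ you cannot conclude that $T_1(u_n)$ is bounded in $BV(\Omega)$ when $\gamma>1$, because $s\mapsto s^{1/\gamma}$ is not Lipschitz at $0$ (a bounded nonnegative $w$ with $w^{\gamma}\in BV(\Omega)$ need not satisfy $w^{1/\gamma\cdot\gamma}=w\in BV(\Omega)$; a one-dimensional oscillating example already defeats it). This is where the nonpositive test function actually belongs: either test with $T_1(u_n)-1\le0$ and drop the nonnegative right-hand side (legitimate in your approximation scheme, since each $u_n$ is a genuine $BV(\Omega)$ function), or, as the paper does working directly on the limit solution for which only $T_k(u)^{\gamma}\in BV(\Omega)$ is known, test with $\bigl(T_1(u)^{\gamma}+\tfrac1n\bigr)^{1/\gamma}-2$ and let $n\to\infty$ by lower semicontinuity. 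Finally, on architecture: the paper does not redo the approximation at all — it starts from the solution already constructed in \cite{lops}, for which $T_k(u)^{\sigma}\in BV(\Omega)$, positivity and uniqueness are known, and only proves the regularity upgrade by choosing test functions in the limit equation; this spares the re-verification of all the items of Definition~\ref{def_f_L1} in the limit that your Steps 3–6 would require.
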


\begin{remark}
    When $\gamma<1$ the result is sharp, i.e., if $m<\frac{N}{(N-1)\gamma + 1}$ the solution is not expected to be in $BV(\Omega)$ (see Example~\ref{ex:sharp}). Nevertheless, recall that these solutions always satisfy that $T_k(u)\in BV(\Omega)$ for every $k>0$ (see~\cite{lops}). \triang
\end{remark}

\begin{remark}
Regarding the  first \bk item of Theorem~\ref{teo_ex_L1}, observe that there is a formal continuity in the summability exponent  of $f$, in the sense that $\frac{N}{(N-1)\gamma + 1} \to 1^+$ as $\gamma\to 1^-$ and that $\frac{N}{(N-1)\gamma + 1} \to N^-$ as $\gamma\to 0^+$. \triang
\end{remark}

\begin{proof}[Proof of Theorem \ref{teo_ex_L1}]
\mbox{}
We start recalling some preparatory  arguments that will be useful for proving both points $i)$ and $ii)$. In~\cite[Section~3]{lops}, the authors show that there exists a unique solution $u$ to problem~\eqref{eq:PbL1} in the sense of Definition~\ref{def_f_L1} such that $T_k(u)^{\sigma} \in BV(\Omega)$ for every $k>0$,  with $\sigma$ defined in~\eqref{sigma}. \bk This solution $u$ is almost everywhere finite and verifies $u>0$ $\mathrm{a.e.}$ in $\Omega$. We denote by $z$ the vector field associated to $u$.

Our strategy consists in proving that, for some $k>0$, both $T_k(u)$ and $G_k(u)$ are in $BV(\Omega)$. As $u= T_k(u) + G_k(u)$, this will imply that $u\in BV(\Omega)$.

\medskip
\textbf{Case $i)$.} 
Here we already know that $T_k(u)\in BV(\Omega)$ for every $k>0$. In what follows, we show for every fixed $k>0$ that $G_k(u)\in BV(\Omega)$.

For $\ell>k>0$, we take $G_k(T_\ell(u))$ as test function in~\eqref{eq:PbL1} obtaining, after applying \eqref{green} and \eqref{des2}, that
\begin{equation*}
    \int_\Omega (z,DG_k(T_\ell(u))) - \int_{\partial\Omega} G_k(T_\ell(u)) [z,\nu]\, d\mathcal H^{N-1} = \int_\Omega \frac{f}{u^\gamma} G_k(T_\ell(u)).
\end{equation*}

Using that $(z,DG_k(T_\ell(u))) = |DG_k(T_\ell(u))|$ as measures in $\Omega$ by Lemma~\ref{lem:Composition} and reasoning as in  Remark~\ref{rem:bordo_L1} one readily  gets 
\begin{equation*}
    \|G_k(T_\ell(u))\|_{BV(\Omega)} = \int_\Omega |DG_k(T_\ell(u))| + \int_{\partial\Omega} G_k(T_\ell(u))\, d\mathcal H^{N-1} = \int_\Omega \frac{f}{u^\gamma} G_k(T_\ell(u)).
\end{equation*}

On the right-hand side, we  apply the \bk H\"{o}lder and the Sobolev inequalities to get 
\begin{align*}
\|G_k(T_\ell(u))\|_{BV(\Omega)} &= \int_\Omega \frac{f}{u^\gamma} G_k(T_\ell(u)) \leq \int_\Omega f G_k(T_\ell(u))^{1-\gamma} \\
     &\leq \|f\|_{L^m(\Omega)} \left(\int_\Omega G_k(T_\ell(u))^{m'(1-\gamma)} \right)^\frac{1}{m'} \leq \mathcal{S}_1 \|f\|_{L^m(\Omega)} \|G_k(T_\ell(u))\|_{BV(\Omega)}^{\frac{1^*}{m'}},
\end{align*}
where in the last inequality we have used that $m'(1-\gamma) = 1^*$. Now, observe that $\frac{1^*}{m'} = 1-\gamma$. Then we have
\begin{equation*}
    \|G_k(T_\ell(u))\|_{BV(\Omega)} \leq \left( \mathcal{S}_1 \|f\|_{L^m(\Omega)} \right)^\frac{1}{\gamma},\ \forall \ell>0.
\end{equation*}

Therefore, we obtain that $G_k(T_\ell(u))$ is bounded in $BV(\Omega)$ uniformly with respect to $\ell$. Then, up to subsequences it has an $\mathrm{a.e.}$ limit belonging to $BV(\Omega)$. Since $u$ is almost everywhere finite, that limit must be $G_k(u)$ and we deduce that $G_k(u)\in BV(\Omega)$.

\medskip 
\textbf{Case $ii)$.}
We prove that $T_k(u)\in BV(\Omega)$ for any fixed $k>0$. \bk In this case, we will use the fact that we already know that $T_k(u)^\gamma\in BV(\Omega)$. Observe that function $s\mapsto \left(s+\frac{1}{n}\right)^\frac{1}{\gamma}$ is Lipschitz continuous, so one has that $v_{k,n}=\left(T_k(u)^\gamma + \frac{1}{n} \right)^\frac{1}{\gamma} - k-1 \in BV(\Omega)$. Taking $v_{k,n}$ as test function in \eqref{eq:PbL1} and dropping the right-hand side (observe that $v_{k,n}\leq 0$ for any $n\geq 1$), we obtain after applying \eqref{green} and \eqref{des2} that
\[
\int_\Omega (z,Dv_{k,n}) \leq \int_{\partial\Omega} v_{k,n}[z,\nu]\, d\mathcal H^{N-1} \leq (k+ 1) \mathcal{H}^{N-1}(\partial\Omega).
\]

As $(z,Dv_{k,n}) = |Dv_{k,n}|$ by Lemma \ref{lem:Composition}, we get
\[
\int_\Omega |Dv_{k,n}| \leq (k + 1)\mathcal{H}^{N-1}(\partial\Omega).
\]
By applying the Lebesgue Theorem, one has that $v_{k,n}\to T_k(u)-k-1$ in $L^1(\Omega)$ as $n\to\infty$. Using the lower semicontinuity, we deduce
\begin{equation*}
	\int_\Omega |DT_k(u)| = \int_\Omega |D(T_k(u)-k-1)| \leq (k + 1)\mathcal{H}^{N-1}(\partial\Omega).
\end{equation*}
In this way, since $T_k(u)$ also belongs to $L^\infty(\Omega)$, we can conclude that $T_k(u)\in BV(\Omega)$ for all $k>0$.

Now let us show that $G_k(u)\in BV(\Omega)$ for every $k>0$.  We consider $\ell >0$ and we take $G_k(T_\ell(u))$ \bk as test function in~\eqref{eq:PbL1} and arguing as before, we arrive to
\begin{align*}
\|G_k(T_\ell(u))\|_{BV(\Omega)} &= \int_\Omega \frac{f}{u^\gamma} G_k(T_\ell(u)) = \int_{\{ u \geq k\}} \frac{f}{u^\gamma} G_k(T_\ell(u)) \leq k^{1-\gamma} \int_{\{u\geq k\}} f \leq k^{1-\gamma} \|f\|_{L^m(\Omega)}, \ \forall \ell>0,
\end{align*}
where we have taken into account that $G_k(T_\ell(u))\leq u$. In this way, $G_k(T_\ell(u))$ is uniformly bounded in $BV(\Omega)$ with respect to $\ell$ and, as we have seen above, this implies $G_k(u)\in BV(\Omega)$. This concludes the proof.

\end{proof}

\subsection{The case of a generic nonlinear term}\label{sec6}\label{sec5b}

From the proof of Theorem \ref{teo_ex_L1} in Section \ref{sec5a} one could guess that only the behaviour of $h(s)$ at infinity really matters  in order to obtain \bk  solutions in $BV(\Omega)$. In fact, since essentially from $T_k(u)^{\sigma} \in BV(\Omega)$ one always can prove $T_k(u) \in BV(\Omega)$, then only $G_k(u)$ really  plays a crucial role.
\medskip

Here  we show that this is the case and that, in fact, all the previous results are still true without assuming any behaviour of $h(s)$ at zero. In order to be complete we deal with the case of a generic  nonlinear lower order term; namely, we consider problem\bk 
\begin{equation}
	\label{eq:PbF}
	\begin{cases}
		\dis -\Delta_1 u = F(x,u) & \text{in}\;\Omega,\\
		u=0 & \text{on}\;\partial\Omega,
	\end{cases}
\end{equation}
and we assume that $F(x,s)$ is a nonnegative Carath\'{e}odory function such that
\begin{equation} \label{eq:hyp_F}
    F(x,s) \leq h(s) f(x),\ \forall (x,s) \in \Omega\times [0,\infty),
\end{equation}
where $h\colon [0,\infty) \to [0,\infty]$ is a continuous function with 
\[
h(\infty):=\limsup_{s\to\infty}h(s)<  \infty,
\]
and $0\leq f\in L^m(\Omega)$ with $m\geq 1$. Without further growth restrictions, we  assume that 
\[
 h(0)= \infty.
\]
Moreover, for simplicity we suppose that 
\begin{equation} \label{eq:hyp_Fi}
F(x,0) = \infty\ \  \text{for}\ \  \mathrm{a.e.} \ x \ \ \text{in}\ \  \Omega.
\end{equation} 
This last request is only technical as it allows to obtain positive solutions and to  deal with the easiest (and clearer) definition of solution. A similar argument to the one devised in Section~\ref{sec_fnon} can be applied to address general $F(x,s)$; in this case, the approximated functions to consider would be  $F_n(x,s) = F(x,s)+\frac{1}{n}h(s)$.

\begin{defin}\label{def_f_F}
 A positive function $u$ with $T_k(u)\in BV_{\rm loc}(\Omega)$ for every $k>0$ is a solution to problem~\eqref{eq:PbF} if there exists $z\in \mathcal{D}\mathcal{M}^\infty(\Omega)$ with $\|z\|_{L^\infty(\Omega)^N}\le 1$ such that
	\begin{equation*}        
		-\operatorname{div}z = F(x,u) \ \ \text{as measures in }\Omega,
	\end{equation*}
	\begin{equation*}         
 		(z,DT_k(u))=|DT_k(u)| \ \ \text{as measures in } \Omega \text{ (for every } k>0),
	\end{equation*}        
	and one of the following conditions holds:
    \begin{equation*}  
    \lim_{\varepsilon\to 0^+}\fint_{\Omega\cap B(x,\epsilon)} T_k(u (y)) \, dy = 0 \ \ \ \text{or} \ \ \ [z,\nu] (x)= -1 \ \ \ \text{for  $\mathcal{H}^{N-1}$-a.e. } x \in \partial\Omega.		
	\end{equation*}
\end{defin}

\begin{remark}
 As we said, \bk the generality of the term $F(x,s)$ makes it difficult to encompass the non-singular case with some condition on $F(x,s)$ and to formulate, for this case, an appropriate definition of solution. The situation is clearer when considering $F(x,s) = h(s)f(x)$. In this case, $F(x,0) = \infty$ is equivalent to $f>0$ and $h(0)=\infty$. When $f\geq 0$ and $h(0)<\infty$ (i.e. in the non-singular case), the notion of solution  is  that \bk  of Definition~\ref{def_f_F}.  In any case, we underline that the following result is new even for non-singular problems. \triang
\end{remark}

We stress again  that no behaviour has been imposed on $h(s)$ at $s=0$. The following  result highlights indeed that only the behaviour of the nonlinearity at infinity plays a role in the regularity of the solution,  the behaviour at zero being negligible. This fact represents a striking  difference with respect to the $p$-Laplacian case.

\begin{theorem} \label{teo_ex_F}
Assume that $F(x,s)$ verifies~\eqref{eq:hyp_F}  and \eqref{eq:hyp_Fi}, with $0< f\in L^m(\Omega)$ for $m\geq 1$. Then, a solution $u\in BV(\Omega)$ of~\eqref{eq:PbF} in the sense of Definition \ref{def_f_F} exists if one of the following cases occurs:  
\begin{enumerate}[i)]
    \item $m= N$ and $\|f\|_{L^N(\Omega)} <  (\mathcal{S}_1 h(\infty))^{-1}$.
    \item $m=\left( \frac{1^*}{1-\theta} \right)'=\frac{N}{(N-1)\theta + 1}$ and $\limsup_{s\to\infty} h(s) s^\theta < \infty$ for some $0< \theta < 1$.\\[-2.2mm]
    \item $m=1$ and $\limsup_{s\to\infty} h(s) s < \infty$.
\end{enumerate}
Moreover, in the first case $u\in L^\infty(\Omega)$.
\end{theorem}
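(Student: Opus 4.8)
The plan is to adapt the two-part decomposition $u = T_k(u) + G_k(u)$ already used in the proof of Theorem~\ref{teo_ex_L1}, and to handle the three cases by a common scheme: first obtain a solution in a suitable "truncated" sense from the existing literature (as in~\cite{lops}), then show $T_k(u)\in BV(\Omega)$ for every $k>0$, and finally show $G_k(u)\in BV(\Omega)$ for some fixed $k>0$ large enough (depending on the case). Summing the two gives $u\in BV(\Omega)$. For the existence of the starting object I would proceed by approximation: replace $F$ by $F_n(x,s)=\min(F(x,s),n)$ or equivalently regularize the singularity, solve the approximate problems via Theorem~\ref{teo_ex} (or the $p\to 1^+$ limit as in~\cite{DGOP}), and pass to the limit exactly as in the Step~1--Step~6 scheme of the proof of Theorem~\ref{teo_exnon}; the hypothesis~\eqref{eq:hyp_Fi} ensures positivity of $u$ and lets us use the cleanest notion of solution, Definition~\ref{def_f_F}. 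This part is essentially routine once the a priori estimates below are in place.

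For the estimate on $T_k(u)$, the key trick — the "basic idea" highlighted in the introduction — is to test the equation with a nonpositive function whose gradient mimics that of the solution, thereby killing the singular right-hand side. Concretely, since from the approximation one knows $T_k(u)^\sigma\in BV(\Omega)$ where $\sigma$ can be taken as any exponent making $h(s)s^\sigma$ bounded near the origin, one takes $v_{k,n}=\bigl(T_k(u)^\sigma+\tfrac1n\bigr)^{1/\sigma}-k-1\in BV(\Omega)\cap L^\infty(\Omega)$, which is nonpositive. Testing the equation with $v_{k,n}$, the right-hand side $\int_\Omega F(x,u)v_{k,n}\le 0$ is dropped, and using Green's formula~\eqref{green}, \eqref{des2}, Lemma~\ref{lem:Composition} and the bound~\eqref{p27} on the normal trace, one gets
\begin{equation*}
\int_\Omega |Dv_{k,n}| \le (k+1)\mathcal{H}^{N-1}(\partial\Omega).
\end{equation*}
Letting $n\to\infty$ and using lower semicontinuity of the total variation yields $\int_\Omega |DT_k(u)|\le (k+1)\mathcal{H}^{N-1}(\partial\Omega)$, so $T_k(u)\in BV(\Omega)$ for every $k>0$, uniformly. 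This is exactly Step~2 of Theorem~\ref{teo_ex} and Case~$ii)$ of Theorem~\ref{teo_ex_L1} transcribed to the general $F$.

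The estimate on $G_k(u)$ is where the three cases split, and it is the step I expect to carry the real content. For $\ell>k$ one tests with $G_k(T_\ell(u))$; after~\eqref{green}, \eqref{des2}, Lemma~\ref{lem:Composition} and the boundary condition one arrives at
\begin{equation*}
\|G_k(T_\ell(u))\|_{BV(\Omega)} = \int_\Omega F(x,u)\,G_k(T_\ell(u)) \le h_k(\infty)\int_\Omega f\,G_k(T_\ell(u)),
\end{equation*}
using~\eqref{eq:hyp_F} and that $G_k(T_\ell(u))$ is supported in $\{u\ge k\}$. In case~$i)$ ($m=N$), Hölder and the Sobolev inequality give $\|G_k(T_\ell(u))\|_{BV(\Omega)}\le h_k(\infty)\mathcal S_1\|f\|_{L^N(\Omega)}\|G_k(T_\ell(u))\|_{BV(\Omega)}$, and choosing $k$ large so that $h_k(\infty)\mathcal S_1\|f\|_{L^N(\Omega)}<1$ (possible by the smallness assumption and $h_k(\infty)\downarrow h(\infty)$) forces $\|G_k(T_\ell(u))\|_{BV(\Omega)}=0$; here one moreover gets $u\in L^\infty(\Omega)$, as in Theorem~\ref{teo_reg}. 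In case~$iii)$ ($m=1$), the growth $h(s)s\le C$ for large $s$ gives $F(x,u)G_k(T_\ell(u))\le C f$ on $\{u\ge k\}$ pointwise, whence $\|G_k(T_\ell(u))\|_{BV(\Omega)}\le C\|f\|_{L^1(\Omega)}$ uniformly in $\ell$. In case~$ii)$, interpolating: $h(u)G_k(T_\ell(u))\le C\,u^{-\theta}G_k(T_\ell(u))\le C\,G_k(T_\ell(u))^{1-\theta}$ on $\{u\ge k\}$, so by Hölder with exponent $m'$ and the Sobolev inequality, using $m'(1-\theta)=1^*$ and $1^*/m'=1-\theta$ exactly as in Case~$i)$ of Theorem~\ref{teo_ex_L1}, one gets $\|G_k(T_\ell(u))\|_{BV(\Omega)}\le (\mathcal S_1 h_k(\infty)\|f\|_{L^m(\Omega)})^{1/\theta}$ uniformly in $\ell$. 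In all three cases $G_k(T_\ell(u))$ is bounded in $BV(\Omega)$ uniformly in $\ell$, hence converges (up to subsequence) a.e. and in $L^1$ to its pointwise limit, which is $G_k(u)$ since $u$ is a.e. finite; lower semicontinuity then gives $G_k(u)\in BV(\Omega)$. Combining with $T_k(u)\in BV(\Omega)$ concludes $u\in BV(\Omega)$. The main obstacle I anticipate is making the passage to the limit in the approximation rigorous for the fully general Carathéodory $F$ — in particular verifying that the vector field $z$ and the boundary identities survive the limit when one only controls truncations — but this is handled verbatim by the machinery of Theorem~\ref{teo_exnon}.
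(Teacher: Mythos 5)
Your overall architecture — decompose $u=T_k(u)+G_k(u)$, prove $T_k(u)\in BV(\Omega)$ by a nonpositive test function, then split into three cases for $G_k(u)$ — is exactly the paper's, and your treatment of $G_k(u)$ in all three cases matches the paper's proof essentially line by line (test with $G_k(T_\ell(u))$, use \eqref{green}, \eqref{des2}, Lemma~\ref{lem:Composition}, H\"older/Sobolev with $m'(1-\theta)=1^*$, pass to the limit in $\ell$ by lower semicontinuity). That part is fine.

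The gap is in your step for $T_k(u)$. You write that ``$T_k(u)^\sigma\in BV(\Omega)$ where $\sigma$ can be taken as any exponent making $h(s)s^\sigma$ bounded near the origin'', and you then test with $v_{k,n}=\bigl(T_k(u)^\sigma+\tfrac1n\bigr)^{1/\sigma}-k-1$. But Theorem~\ref{teo_ex_F} imposes \emph{no} growth restriction on $h$ at $s=0$ — that is its entire point, and the reason it is not already covered by Theorem~\ref{teo_exnon} under hypothesis~\eqref{eq:hyp_h}. For $h(s)=e^{1/s}$, say, no such $\sigma$ exists, so neither the a priori information $T_k(u)^\sigma\in BV(\Omega)$ nor your test function is available, and your argument only reproves the power-singularity case. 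The paper's fix is to replace $s^\sigma$ by the function $\Gamma(s)=1/\tilde h(s)$, where $\tilde h\ge h$ is a smooth decreasing majorant with the same behaviour at infinity, chosen (as in \cite[Section~2.4]{lops}) so that $\Gamma$ is increasing, $\Gamma(0)=0$, and $\Gamma^{-1}(\cdot+\tfrac1n)$ is Lipschitz on bounded intervals containing $0$. Testing with $\Gamma(T_k(u))$ yields $\int_\Omega F(x,u)\,\Gamma(T_k(u))\le \int_{\{u\le k\}}f+\Gamma(k)\int_{\{u>k\}}fh(u)<\infty$ with no assumption at $s=0$ (since $h\,\Gamma\le 1$), so the solutions produced in \cite{DGOP} and \cite{lops} satisfy $\Gamma(T_k(u))\in BV(\Omega)$; then the paper's Proposition~\ref{prop_reg_F} runs your nonpositive-test-function argument with $v_{k,n}=\Gamma^{-1}\bigl(\Gamma(T_k(u))+\tfrac1n\bigr)-\Gamma^{-1}(\Gamma(k)+1)$ in place of the power. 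Your scheme is salvageable exactly by this substitution, but as written it does not prove the theorem in its stated generality. (A secondary, smaller point: the paper does not redo the approximation scheme of Theorem~\ref{teo_exnon} for general Carath\'eodory $F$; it quotes the existence of the starting solution from \cite[Section~8.2]{DGOP} and \cite{lops}, so the limit-passage difficulties you anticipate are outsourced rather than reproved.)
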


To prove this result, it is good to have some smoothness properties over $h(s)$. This can be done without loosing  generality. Indeed, for a given $h(s)$ as before it is  always possible to  construct (see for instance~\cite[Section~2.4]{lops}) a decreasing function $\tilde h\colon [0,\infty) \to (0,\infty]$ such that $h(s)\leq \tilde h(s)$ for every $s>0$ and verifying 
\begin{enumerate}[$a)$]
    \item $\tilde h\in C^1((0,\infty))$ and $\frac{1}{\tilde h} \in C^1([0,\infty))$ with $\left(\frac{1}{\tilde h} \right) (0) := 0$,
    \item $\left(\frac{1}{\tilde h}\right)^{-1} \colon \left[0, \frac{1}{h(\infty)} \right)\to [0,\infty)$ is locally Lipschitz in $\left(0, \frac{1}{h(\infty)} \right)$ (here symbol $^{-1}$ stands for the inverse function).
\end{enumerate}

Moreover, $\tilde h(s)$ can be chosen with the same behaviour at infinity as $h(s)$, in the sense that 
\[
\limsup_{s\to\infty} \tilde h(s) s^\theta = \limsup_{s\to\infty} h(s) s^\theta,\ \forall \theta\geq 0.
\]

Then, there is no restriction in assuming, in \eqref{eq:hyp_F}, that $h(s)$ is decreasing and verifies all the smoothness properties above-mentioned. For the sake of clarity, we denote $\Gamma(s):= \left(\frac{1}{h}\right) (s)$ for every $s\geq 0$.  Observe that $\Gamma(s)$ is increasing and that $\Gamma^{-1}(s)$ is locally Lipschitz in open intervals not containing 0. It is worth noting that, as a consequence of $\Gamma$ being derivable at 0, we are essentially dealing with strong singularities.
\medskip

When case $i)$ of Theorem~\ref{teo_ex_F} takes place, in~\cite[Section~8.2]{DGOP} it is proved that a solution $u\in BV_\mathrm{loc}(\Omega)$ to~\eqref{eq:PbF} such that $\Gamma(u)\in BV(\Omega)$ exists. Furthermore, one can easily follow the steps of~\cite{lops} (combined with the ideas of~\cite[Section~8.2]{DGOP}) to show that, in cases $ii)$ and $iii)$ of Theorem~\ref{teo_ex_F}, a solution $u$ of~\eqref{eq:PbF} does exist satisfying $T_k(u)\in BV_\mathrm{loc}(\Omega)$ and $\Gamma(T_k(u))\in BV(\Omega)$ for every $k>0$.
\medskip

Intuitively, the reason for having $\Gamma(T_k(u))\in BV(\Omega)$ for every $k>0$ in any case is that, if one formally takes $\Gamma(T_k(u))$ as test function in~\eqref{eq:PbF}, one can arrive to
\begin{align*}
\int_\Omega |D\Gamma(T_k(u))| + \int_{\partial\Omega} \Gamma(T_k(u))\, d\mathcal{H}^{N-1} &= \int_\Omega F(x,u) \, \Gamma(T_k(u)) \leq \int_\Omega fh(u) \, \Gamma(T_k(u))\\
&= \int_{\{u\leq k\}} f + \Gamma(k) \int_{\{u>k\}} fh(u) <\infty,
\end{align*}
where the last inequality is always true since $h(\infty)<\infty$.
\medskip

Having $\Gamma(T_k(u))\in BV(\Omega)$ is enough to deduce that $T_k(u)\in BV(\Omega)$, as the following regularity result shows.
\begin{Proposition}\label{prop_reg_F}
    Let $\Gamma\colon [0,\infty)\to [0,\infty)$ be an increasing function such that $\Gamma^{-1}(s)$ is locally Lipschitz in open intervals not containing 0. If $u$ is a solution to~\eqref{eq:PbF} such that $\Gamma(T_k(u))\in BV(\Omega)$ for some $k>0$, then $T_k(u)\in BV(\Omega)$.
\end{Proposition}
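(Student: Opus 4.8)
The plan is to mimic Step 2 of the proof of Theorem \ref{teo_reg} (and the analogous arguments in Theorem \ref{teo_exnon} and Case $ii)$ of Theorem \ref{teo_ex_L1}): the idea is to test the equation with a nonpositive function whose gradient essentially coincides with $DT_k(u)$, so that the singular right-hand side can simply be dropped while still retaining control over $\int_\Omega |DT_k(u)|$ and the boundary term. The key observation is that $\Gamma^{-1}$ being locally Lipschitz away from $0$ lets us regularize the composition $\Gamma^{-1}\big(\Gamma(T_k(u))\big)=T_k(u)$ by adding a small constant inside, which moves the argument away from the bad point $0$ and makes the composition genuinely Lipschitz.

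First I would set, for $n$ large,
\[
v_{k,n} := \Gamma^{-1}\!\Big(\Gamma(T_k(u)) + \tfrac1n\Big) - k - 1.
\]
Since $\Gamma(T_k(u))\in BV(\Omega)$ and $0\le \Gamma(T_k(u))\le \Gamma(k)$, the function $s\mapsto \Gamma^{-1}(s+\tfrac1n)$ is Lipschitz on the relevant compact range $[0,\Gamma(k)]$ (it is locally Lipschitz on $(0,\infty)\supset[\tfrac1n,\Gamma(k)+\tfrac1n]$), hence $v_{k,n}\in BV(\Omega)\cap L^\infty(\Omega)$; moreover $v_{k,n}\le \Gamma^{-1}(\Gamma(k)+\tfrac1n)-k-1 \le 0$ for $n$ large since $\Gamma^{-1}$ is continuous and $\Gamma^{-1}(\Gamma(k))=k$, and $\|v_{k,n}\|_{L^\infty(\Omega)}\le k+1$. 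Then I would use $v_{k,n}$ as a test function in the equation $-\operatorname{div}z = F(x,u)$: because $F(x,u)\ge 0$ and $v_{k,n}\le 0$, the right-hand side can be discarded, and applying the Green formula \eqref{green} together with \eqref{des2} gives
\[
\int_\Omega (z,Dv_{k,n}) \le \int_{\partial\Omega} v_{k,n}[z,\nu]\,d\mathcal{H}^{N-1} \le (k+1)\,\mathcal{H}^{N-1}(\partial\Omega),
\]
using $\|[z,\nu]\|_{L^\infty(\partial\Omega)}\le 1$ and $|v_{k,n}|\le k+1$ on $\partial\Omega$. The next step is to identify $(z,Dv_{k,n})$ with $|Dv_{k,n}|$: since $v_{k,n}$ is obtained from $T_k(u)$ by composing with the non-decreasing locally Lipschitz map $s\mapsto \Gamma^{-1}(\Gamma(s)+\tfrac1n)$ (non-decreasing because $\Gamma$ and $\Gamma^{-1}$ both are), Lemma \ref{lem:Composition} applied to the relation $(z,DT_k(u))=|DT_k(u)|$ yields $(z,Dv_{k,n})=|Dv_{k,n}|$ as measures. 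Hence $\int_\Omega |Dv_{k,n}| \le (k+1)\mathcal{H}^{N-1}(\partial\Omega)$, a bound uniform in $n$.

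Finally I would let $n\to\infty$: by dominated convergence (the integrand is bounded by $k+1$) and continuity of $\Gamma^{-1}$, $v_{k,n}\to T_k(u)-k-1$ in $L^1(\Omega)$, and lower semicontinuity of the total variation with respect to $L^1$-convergence gives
\[
\int_\Omega |DT_k(u)| = \int_\Omega |D(T_k(u)-k-1)| \le (k+1)\,\mathcal{H}^{N-1}(\partial\Omega) < \infty.
\]
Since $T_k(u)\in L^\infty(\Omega)\subset L^1(\Omega)$, this shows $T_k(u)\in BV(\Omega)$, as claimed. The only delicate point — and the step I would be most careful about — is checking that $v_{k,n}$ is an admissible test function and that the composition is legitimately non-decreasing and locally Lipschitz so that Lemma \ref{lem:Composition} applies; once the constant $\tfrac1n$ has been inserted to avoid the singularity of $\Gamma^{-1}$ at $0$, everything else is a routine repetition of the $BV$-estimate scheme already used several times in the paper, and the right-hand side never needs to be estimated at all because its sign is favorable.
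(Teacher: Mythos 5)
Your proof is correct and follows essentially the same route as the paper's: test with $\Gamma^{-1}(\Gamma(T_k(u))+\tfrac1n)$ minus a constant, drop the nonnegative right-hand side against the nonpositive test function, identify $(z,Dv_{k,n})$ with $|Dv_{k,n}|$ via Lemma~\ref{lem:Composition}, and pass to the limit by lower semicontinuity. The only (immaterial) difference is that you subtract $k+1$ and therefore need $n$ large to ensure $v_{k,n}\le 0$, whereas the paper subtracts $\Gamma^{-1}(\Gamma(k)+1)$ so that nonpositivity holds for all admissible $n$.
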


\begin{proof}
The key here is that function $s\mapsto \Gamma^{-1}\left(s+\frac{1}{n} \right)$ is Lipschitz on bounded intervals containing 0. Then, for every $n\in \mathbb{N}$ we have that $v_{k,n}= \Gamma^{-1} \left(\Gamma(T_k(u)) + \frac{1}{n} \right) - \Gamma^{-1} (\Gamma(k) + 1) \in BV(\Omega)$. Observe that since both $\Gamma$ and $\Gamma^{-1}$ are increasing, one has that $v_{k,n}\leq 0$.

We point out that $\Gamma(\infty) :=\lim_{s\to\infty} \Gamma(s)$ may be finite and then the domain of $\Gamma^{-1}$ would be only $[0,\Gamma(\infty))$. Without loss of generality, we assume that $\Gamma(k) + \frac{1}{n}$ belongs to the domain of $\Gamma^{-1}$ for every $n\in\N$ (otherwise it suffices to consider $n\geq n_0$ with $n_0$ suitably large).

Taking $v_{k,n}$ as test function in \eqref{eq:PbF} and dropping the right-hand side (recall that $v_{k,n}\leq 0$), we obtain after applying \eqref{green} and \eqref{des2} that
\[
\int_\Omega (z,Dv_{k,n}) \leq \int_{\partial\Omega} v_{k,n}[z,\nu]\, d\mathcal H^{N-1} \leq \Gamma^{-1} (\Gamma(k) + 1) \mathcal{H}^{N-1}(\partial\Omega).
\]

As $(z,Dv_{k,n}) = |Dv_{k,n}|$ by Lemma \ref{lem:Composition}, we get
\[
\int_\Omega |Dv_{k,n}| \leq \Gamma^{-1}(\Gamma(k) + 1)\mathcal{H}^{N-1}(\partial\Omega).
\]

By Lebesgue Theorem, one has that $v_{k,n}\to T_k(u)-\Gamma^{-1} (\Gamma(k) + 1 )$ in $L^1(\Omega)$ as $n\to\infty$. Using the lower semicontinuity, we deduce
\begin{equation*}
\int_\Omega |DT_k(u)| = \int_\Omega \left| D(T_k(u)-\Gamma^{-1} (\Gamma(k) + 1)) \right| 
\leq \Gamma^{-1} \left(\Gamma(k) + 1 \right) \mathcal{H}^{N-1}(\partial\Omega).
\end{equation*}
In this way, since $T_k(u)$ also belongs to $L^\infty(\Omega)$, we can conclude that $T_k(u)\in BV(\Omega)$.
\end{proof}

The above proposition shows that one can always find solutions to \eqref{eq:PbF} having any of their truncations with finite energy. Then, in order to show that $u\in BV(\Omega)$, one just needs to take care about $u$ at infinity; in other words, only $G_k(u)$ plays a key role in proving that $u$ has finite energy. Therefore, to prove Theorem~\ref{teo_ex_F}, we focus our attention on $G_k(u)$.
\bk
\begin{proof}[Proof of Theorem~\ref{teo_ex_F}]
In any case, we have seen that there exists a solution $u$ of~\eqref{eq:PbF} in the sense of Definition~\ref{def_f_F} such that $\Gamma(T_k(u))\in BV(\Omega)$ for every $k>0$. Then, Proposition~\ref{prop_reg_F} applies and one has   $T_k(u)\in BV(\Omega)$ for every $k>0$. Therefore, since $u=T_k(u)+G_k(u)$, it suffices to show that $G_k(u)\in BV(\Omega)$.

\medskip 
\textbf{Case $i)$.}
\mbox{} In this case\bk, one can prove that $\|G_k(u)\|_{BV(\Omega)} = 0$ for some $k$ large. This implies that $u\in L^\infty(\Omega)$ and that $u\in BV(\Omega)$. The proof is almost identical to the one of Theorem~\ref{teo_reg} and we omit it.

\medskip

\textbf{Case $ii)$.}
 For $\ell>k>0$, we take $G_k(T_{\ell}(u)) \in BV(\Omega)\cap L^\infty(\Omega)$ as test function in~\eqref{eq:PbF} and,  after applying~\eqref{green},~\eqref{des2} and~\eqref{eq:hyp_F}, one gets  that
\begin{equation*}
    \int_\Omega (z,DG_k(T_\ell(u))) - \int_{\partial\Omega} G_k(T_\ell(u)) [z,\nu]\, d\mathcal H^{N-1} = \int_\Omega F(x,u) G_k(T_\ell(u)) \leq \int_{\{u\geq k\}} f h(u) G_k(T_\ell(u)).
\end{equation*}
Using that $(z,DG_k(T_\ell(u)))=|DG_k(T_\ell(u))|$ as measures in $\Omega$ by Lemma~\ref{lem:Composition} and arguing as in Remark~\ref{rem:bordo_L1}, on the left-hand side we obtain
\begin{equation}
    \label{eq:Pf_ex_F_1}
    \|G_k(T_\ell(u))\|_{BV(\Omega)} = \int_\Omega |DG_k(T_\ell(u))| + \int_{\partial\Omega} |G_k(T_\ell(u))|\, d\mathcal H^{N-1} \leq \int_{\{u\geq k\}} f h(u) G_k(T_\ell(u)).
\end{equation}

By hypothesis, we know that $\limsup_{s\to\infty} h(s)s^\theta<\infty$ for some $0<\theta <1$. This implies the existence of some $c_1>0$ and some $\tilde k>0$ such that
\begin{equation}
    \label{eq:Pf_ex_F_2}
    h(s) \leq \frac{c_1}{s^\theta},\ \forall s\geq \tilde k.
\end{equation}

From now on, we fix $k=\tilde k$. Then, observing that $G_k(T_\ell(s))\leq s$, from~\eqref{eq:Pf_ex_F_1} and~\eqref{eq:Pf_ex_F_2} we deduce
\begin{equation}
    \label{eq:Pf_ex_F_3}
    \|G_k(T_\ell(u))\|_{BV(\Omega)} \leq  c_1 \bk \int_{\{u\geq k\}} f G_k(T_\ell(u))^{1-\theta}.
\end{equation}

Using H\"{o}lder and Sobolev inequalities in the previous we get
\begin{align}
\label{eq:Pf_ex_F_4}
     \|G_k(T_\ell(u))\|_{BV(\Omega)} \leq  c_1 \bk \|f\|_{L^m(\Omega)} \left(\int_\Omega G_k(T_\ell(u))^{m'(1-\theta)} \right)^\frac{1}{m'} \leq  c_1 \bk\mathcal{S}_1 \|f\|_{L^m(\Omega)} \|G_k(T_\ell(u))\|_{BV(\Omega)}^{\frac{1^*}{m'}},
\end{align}
where the last inequality is true since $m'(1-\theta) = 1^*$. As $\frac{1^*}{m'} = 1-\theta$, we have
\begin{equation*}
\|G_k(T_\ell(u))\|_{BV(\Omega)} \leq ( c_1 \bk\mathcal{S}_1 \|f\|_{L^m(\Omega)})^\frac{1}{\theta}.
\end{equation*}

Now we  pass to the limit in $\ell\to \infty$.  The previous implies that \bk $G_k(T_\ell(u))$ is bounded in $L^1(\Omega)$ and then Fatou Lemma gives $G_k(u)\in L^1(\Omega)$. In this way, Lebesgue Theorem can be applied in order to get $G_k(T_\ell(u))\to G_k(u)$ in $L^1(\Omega)$ as $\ell\to  \infty$. Therefore, by lower semicontinuity one can pass to the limit in~\eqref{eq:Pf_ex_F_4} to conclude
\begin{equation*}
\|G_k(u)\|_{BV(\Omega)} \leq ( c_1 \bk \mathcal{S}_1 \|f\|_{L^m(\Omega)})^\frac{1}{\theta}
\end{equation*}
and thus $G_k(u)\in BV(\Omega)$.
\mbox{}

\medskip 
\textbf{Case $iii)$.}
Following the same lines as in the previous case (but now using that $\limsup_{s\to\infty} h(s)s<\infty$), we can deduce, in place of~\eqref{eq:Pf_ex_F_3}, that  
\[
\|G_k(T_\ell(u))\|_{BV(\Omega)} \leq C\int_{\{u\geq k\}} f \leq C\|f\|_{L^1(\Omega)}.
\]
Therefore, one can pass to the limit as in the previous case to obtain that $G_k(u)\in BV(\Omega)$.
\end{proof}

\section{Further  comments, extensions and explicit  examples}
\label{sec_final}

In this section we provide some final comments together with the discussion of some open problems and   possible extensions, as well as some  explicit examples. 

\subsection{Some open questions} 

As already explained in the introduction, when dealing with the $p$-Laplace operator, the $\gamma$-threshold for having finite energy solutions in problem~\eqref{op} reads as
\begin{equation}
\label{eq:threshold_1}
    \gamma < \frac{2p-1}{p-1},
\end{equation}
at least when $f$ is smooth  in $\overline{\Omega}$ \bk and bounded away from zero. This was proved first in~\cite{lm} for the case $p=2$, for general $p>1$ in~\cite{S} and, in this paper, we have completed the study of this phenomenon proving that it also holds for $p=1$.
\medskip

On the other hand, in~\cite{OP} the authors consider the Laplace operator (i.e. $p=2$) and a  positive \bk $f$ only  belonging to $L^m(\Omega)$, with $m> 1$. For $\gamma>1$, they show that the unique solution for problem~\eqref{pbintro} belongs to $H_0^1(\Omega)$ for any such $f$'s if and only if
\begin{equation}
\label{eq:threshold_2}
\gamma < 3-\frac{2}{m}.
\end{equation}

Up to our knowledge, this threshold have not been extended to the $p$-Laplacian case. The following example, mainly adapted from~\cite[Example $2$]{OP}, allows us to conjecture what should be this range for the $p$-Laplacian and to guess its behaviour when $p$ tends to $1^{+}$.

\begin{example}\label{ex1}
Let $p>1$ and let $\gamma>1$. For $\eta>0$, define function $u = (1-|x|^{2})^{\eta}$ and define $f(x)=u^\gamma(-\Delta_p u)$. Then, $u$ is a solution to
\begin{equation*}
    \begin{cases}
        \displaystyle -\Delta_p u= \frac{f(x)}{u^{\gamma}}   &\text{in }\ B_{1}(0), \\
        u=0 & \text{on }\ \partial B_{1}(0), 
    \end{cases}
\end{equation*}
where $B_R(0)$ is the ball centered at the origin with radius $R>0$, and
$$
f(x)\sim \frac{1}{(1-|x|^{2})^{p-\eta(\gamma+p-1)}}.
$$ 
For fixed  $m\geq 1$, one can require
$$
\eta >\frac{p-\frac{1}{m}}{\gamma+p-1}
$$
in order to get $f\in L^m(\Omega)$. With regard to $u$, one always has that $u^{\frac{\gamma-1+p}{p}}\in W^{1,p}_0(\Omega)$ but $u\in W^{1,p}_0(\Omega)$ only when $\eta>\frac{p-1}{p}$. Requiring that
$$
\frac{p-\frac{1}{m}}{\gamma+p-1} > \frac{p-1}{p},
$$
one yields to
\begin{equation}
\label{eq:threshold_3}
\gamma < \frac{2p-1}{p-1} - \left(\frac{p}{p-1}\right) \frac{1}{m} = \frac{\left(2-\frac{1}{m}\right)p -1}{p-1}.
\end{equation}
Therefore, $u\in W^{1,p}_0(\Omega)$ when $\gamma$ verifies~\eqref{eq:threshold_3}.
\triang
\end{example}

Confronting~\eqref{eq:threshold_3} with~\eqref{eq:threshold_1} and~\eqref{eq:threshold_2}, we can see some continuity in the parameters. We conjecture that, for $\gamma>1$,  threshold~\eqref{eq:threshold_3} should be optimal in order to obtain $W_0^{1,p}(\Omega)$ solutions of problem~\eqref{op} for  positive \bk data $f\in L^m(\Omega)$, $m>1$. 
\medskip

The results proven in this work give strength to the conjecture by showing that, apart from $p=2$, it also holds true for $p=1$ (see Theorem~\ref{teo_ex_L1}). When $p=1$, threshold~\eqref{eq:threshold_3} becomes unbounded and, in this case, the conjecture says that a solution of~\eqref{op} exists for every $\gamma>1$ when $m>1$.  In fact, we have shown that this is the case even when $m=1$.
\medskip

To conclude, such examples also show that requiring a power of the solution to belong to the energy space, as Theorem~\ref{teo_reg} does, is quite natural for such  problems.

\subsection{Sharpness of Theorem \ref{teo_ex_L1}}
We present here an example taken from~\cite{lops} which shows that when $\gamma<1$ and $f\in L^m(\Omega)$ with $m<\frac{N}{(N-1)\gamma + 1}$ solutions to~\eqref{eq:PbL1} are not expected to be in $BV(\Omega)$.

\begin{example} \label{ex:sharp}
We look for radial solutions, so we consider again $\Omega$ as the ball $B_R(0)$ centered at the origin with radius $R>0$. We take the positive datum $f(x)= \frac{N-1}{|x|^q}$ with $1<q<N$. Observe that $f\in L^m(\Omega)$ if and only if $m<\frac{N}{q}$. Then, we study problem
\begin{equation*}
    \begin{cases}
        \displaystyle -\operatorname{div}\left( \frac{Du}{|Du|}\right)= \frac{N-1}{|x|^q}\frac{1}{u^{\gamma}}   &\text{in }\ B_{R}(0), \\
        u\geq 0 & \text{in }\ B_{R}(0), \\
        u=0 & \text{on }\ \partial B_{R}(0).
    \end{cases}
\end{equation*}

Assume that the solution $u(x)$ is radially decreasing, i.e., $u(x)=g(|x|)$ with $g'(s)<0$ for $0\leq s\leq R$. In this case, one always has that $z(x) = \frac{Du}{|Du|} = -\frac{x}{|x|}$ and $-\operatorname{div} z = \frac{N-1}{|x|}$. Moreover, the boundary condition is always satisfied in the sense $[z,\nu](x)=-1$ for every $x\in\partial B_R(0)$. Then, equation becomes
\[
\frac{N-1}{|x|} = \operatorname{div} z = \frac{N-1}{|x|^q}\frac{1}{g(|x|)^{\gamma}}
\]
and one obtains that $u(x)=g(|x|) = |x|^\frac{1-q}{\gamma}$. We point out that $u$ is positive and unbounded. This highlights the fact that, in general, one cannot expect bounded solutions to~\eqref{eq:PbL1}.
\medskip

On the other hand, observe that $u$ belongs to $BV(B_R(0))$ if and only if $\gamma>\frac{q-1}{N-1}$. We stress that this condition is always satisfied when $\gamma\geq 1$, while when $\gamma<1$ is only verified when $q< (N-1)\gamma + 1$. Then, when $\gamma<1$ the solution is in $BV(B_R(0))$ if and only if $\frac{N}{(N-1)\gamma + 1} < \frac{N}{q}$, i.e. if $f\in L^m(\Omega)$ with $m=\frac{N}{(N-1)\gamma + 1}$.
 Finally, notice that in any case the truncations are in $BV(B_R(0))$ (to be compared with Definition~\ref{def_f_L1}). 
\triang
\end{example}

\subsection{Non-constant solutions reaching the zero boundary datum at some points of $\partial\Omega$}

The degeneracy of the solutions associated to 1-Laplacian problems makes one wonders the existence of solutions which take the zero value at least at some point on the boundary of $\Omega$. This kind of degeneracy can be observed in solutions found in Example~\ref{ex:sharp}. But the situation can be even worse. For instance, when $\Omega$ is a $C^{1,1}$ domain, the unique solution of problem
\begin{equation}
		\label{pbe}
		\begin{cases}
			\dis -\Delta_1 u = \frac{1}{u^{\gamma}} & \text{in}\;\Omega,\\
			u=0 & \text{on}\;\partial\Omega,
		\end{cases}
\end{equation}  
is  a  suitable power of the Cheeger constant, namely
\[
u(x)=\left(\frac{|\Omega|}{\mathrm{Per}(\Omega)}\right)^{\frac{1}{\gamma}}.
\]

We give here a third example, presented in {\cite[Example $2$]{gop}}, which shows the existence of non-constant solutions of~\eqref{pbe} taking the value zero at some points of $\partial \Omega$.
\bk

\begin{example}
	\label{example2} Let $\Omega$ be a convex open set and let $H_{\Omega}(x)$ denote the variational mean curvature of $\Omega$ (see \cite{BGT} for details). In \cite{MP} it is proven that $-H_{\Omega} (x)$ is a large solution to $\Delta_1 v= v$, i.e. 
	\begin{equation*} 
        \begin{cases}
			\displaystyle \Delta_1 v= v &  \text{in}\, \Omega, \\
			v=\infty & \text{on}\ \partial \Omega\,.	
        \end{cases}
	\end{equation*}
	Let us underline that it can be shown that $|| H_\Omega ||_{L^{\infty}(\rn)}< \infty$ if and only if $\Omega$ is of class $C^{1,1}$. 
	These  solutions are locally bounded assuming the (large) datum $\infty$  at non-regular points of $\Omega$ (e.g. at corners).  		
	Now observe that the change of variable $u=v^{-\frac{1}{\gamma}}$ formally leads to a non-constant solution to \eqref{pbe}. Indeed, in general $H_\Omega (x)$ is known to be non-constant  if the set is not calibrable (see \cite{BGT,acc}); for instance if $\Omega$ is not $C^{1,1} $ (say a square), then  $u=v^{-\frac{1}{\gamma}}$ is positive everywhere and it attains the value $0$ only at the corners of $\Omega$. 
	\begin{figure}[htbp]\centering
		\includegraphics[width=2in]{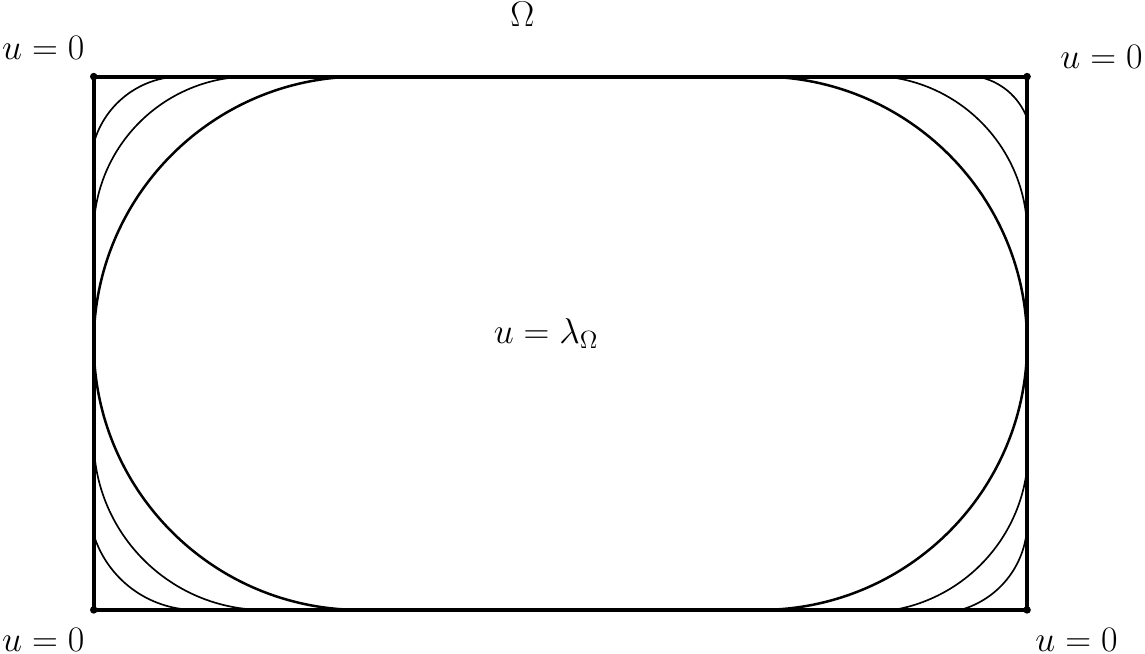}
		\caption{A non-constant solution $u= \bk (-H_\Omega(x))^{-\frac{1}{\gamma}}$ to \eqref{pbe}} \label{calo}
	\end{figure}
	We finally underline that $u$ is constant inside the (unique) Cheeger $C$ set contained in $\Omega$;  this constant $\lambda_\Omega$ is a suitable power of the Cheeger constant (see Figure \ref{calo}). \bk
 \triang
\end{example}

\subsection{Equations with first order terms}
\label{first}

Here we highlight that an analogous result to the one given in Theorem \ref{teo_exnon} holds in presence of first order reaction terms. In order to be concrete, let us have in mind nonnegative solutions to the following problem which has been dealt  with \bk in \cite{gop}
\begin{equation}\label{pbgrad}
	\begin{cases}
		\displaystyle -\Delta_1 u= g(u) |Du| + h(u)f   &\text{in}\, \ \ \Omega, \\
		u=0 & \text{on}\; \partial\Omega, 
	\end{cases}
\end{equation}
where $g$ is a nonnegative function which can blow at the origin as 
\begin{equation}\label{g}
	\displaystyle \exists\; c_2, s_2> 0\;\ \text{such that}\;\  g(s)\le \frac{c_2}{s^\theta} \ \text{ if } s\leq s_2 \text{ and } 0<\theta<1.
\end{equation}

The formal definition of solution for nonnegative $f$ can be found in~\cite[Definition~5.1]{gop}. When $f$ is positive, this definition is like Definition~\ref{def_fnon} but with the addition of hypothesis $g(u^*)\in L^1_{\rm loc}(\Omega, |Du|)$.
\medskip

In~\cite[Theorem 5.3]{gop}, the authors show the existence of a solution $u\in BV_\mathrm{loc}(\Omega)\cap L^\infty(\Omega)$ with $u^{\max(1,\gamma)} \in BV(\Omega)$. Reasoning similarly to the proof of Theorem~\ref{teo_exnon}, we can state the following existence result.

\begin{theorem} 
	Let $g$ satisfy \eqref{g}, let $h$ satisfy \eqref{eq:hyp_h},  and let $f\in L^N(\Omega)$ be nonnegative and such that $\|f\|_{L^N(\Omega)} < \left(\mathcal{S}_1h(\infty)\right)^{-1}$.	Then there exists a solution $u\in BV(\Omega)\cap L^\infty(\Omega)$ to problem \eqref{pbgrad}. 
\end{theorem}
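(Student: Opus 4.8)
The plan is to follow the scheme of the proof of Theorem~\ref{teo_exnon} almost verbatim, treating the extra term $g(u)|Du|$ as a harmless nonnegative contribution that can be dropped whenever we test with a nonpositive function. First I would recall from~\cite[Theorem~5.3]{gop} that, since $g$ satisfies~\eqref{g} and $h$ satisfies~\eqref{eq:hyp_h}, there exists a solution $u\in BV_{\rm loc}(\Omega)\cap L^\infty(\Omega)$ with $u^{\max(1,\gamma)}=u^\sigma\in BV(\Omega)$, along with its associated vector field $z$ with $\|z\|_{L^\infty(\Omega)^N}\le 1$ and the condition $g(u^*)\in L^1_{\rm loc}(\Omega,|Du|)$. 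As in Section~\ref{sec_fnon}, the only thing left to prove is global $BV$ regularity, i.e. that $u\in BV(\Omega)$, and $L^\infty$-boundedness with a bound independent of the approximation; uniqueness is not claimed here because $g$ can be singular and the reaction term destroys monotonicity.

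The argument splits into the two steps of Theorem~\ref{teo_reg}. For the $L^\infty$ bound, if one works with the approximating problems of~\cite{gop} (say with $g_n(u_n)|Du_n|$ and $h(u_n)f_n$), testing with $G_k(u_n)$ and using that $g_n(u_n)|Du_n|\,G_k(u_n)\ge 0$ one can drop the gradient term entirely; what survives is exactly inequality~\eqref{stimaGK} with $f$ replaced by $f_n$, so the smallness condition $\|f\|_{L^N(\Omega)}<(\mathcal S_1 h(\infty))^{-1}$ lets us fix $\tilde k$ large with $1-h_{\tilde k}(\infty)\|f_n\|_{L^N(\Omega)}\mathcal S_1>0$ and conclude $\|u_n\|_{L^\infty(\Omega)}\le\tilde k$ uniformly in $n$, hence $\|u\|_{L^\infty(\Omega)}\le\tilde k$. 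For the $BV$ bound one proceeds as in Step~2 of Theorem~\ref{teo_ex} and Step~2 of Theorem~\ref{teo_exnon}: take $v_{n,\varepsilon}=\left(u_n^\sigma+\varepsilon\right)^{1/\sigma}-\|u_n\|_{L^\infty(\Omega)}-1$, which is nonpositive and bounded, as test function; the term $g_n(u_n)|Du_n|\,v_{n,\varepsilon}\le 0$ is discarded, the $h$-term is discarded, and using~\eqref{green},~\eqref{des2},~\eqref{p27} and Lemma~\ref{lem:Composition} one gets $\int_\Omega|Dv_{n,\varepsilon}|\le(\|u_n\|_{L^\infty(\Omega)}+1)\mathcal H^{N-1}(\partial\Omega)$. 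Letting $\varepsilon\to0$ and $n\to\infty$ with the lower semicontinuity of the total variation and the uniform $L^\infty$ bound yields $\int_\Omega|Du|\le(\tilde k+1)\mathcal H^{N-1}(\partial\Omega)$, so $u\in BV(\Omega)$.

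Finally one has to check that passing to the limit in the approximation preserves the structural identities of the definition of solution in~\cite[Definition~5.1]{gop}: the distributional equation $-(\operatorname{div}z)\chi^*_{\{u>0\}}=g(u)|Du|+h(u)f$, the condition $(z,DT_k(u))=|DT_k(u)|$, the boundary condition, and $g(u^*)\in L^1_{\rm loc}(\Omega,|Du|)$. This is the same limiting procedure already carried out in Steps~3--6 of Theorem~\ref{teo_exnon} together with the handling of the first-order term done in~\cite{gop}, so it can be invoked rather than repeated. The main obstacle, and the only place where a genuine argument beyond ``drop the nonnegative term'' is needed, is precisely the passage to the limit in the gradient term $g(u_n)|Du_n|$: one must control $g(u_n)|Du_n|$ in $L^1_{\rm loc}(\Omega)$ and identify the weak-$*$ limit measure with $g(u)|Du|$ (equivalently $g(u^*)(z,Du)$ on $\{u>0\}$), which relies on the strict convexity/lower semicontinuity machinery and the local strong convergence of $|Du_n|$ already established in~\cite{gop}; since Theorem~\ref{teo_exnon}-type arguments only use that term through its sign, the regularity conclusions $u\in BV(\Omega)\cap L^\infty(\Omega)$ follow once that identification is in hand.
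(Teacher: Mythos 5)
Your overall strategy coincides with the paper's: both take the solution produced by \cite[Theorem 5.3]{gop} (which already lies in $BV_{\rm loc}(\Omega)\cap L^\infty(\Omega)$ and has $u^{\max(1,\gamma)}\in BV(\Omega)$) and upgrade it to $BV(\Omega)$ by testing with the nonpositive function $\left(u^{\sigma}+\varepsilon\right)^{1/\sigma}-\|u\|_{L^\infty(\Omega)}-1$ and discarding the right-hand side. That core step is correct: against a nonpositive test function both $g(u)|Du|\,v$ and $h(u)f\,v$ contribute with the favourable sign, so they may be dropped from the side that majorizes $\int_\Omega|Dv|$, and lower semicontinuity finishes the argument exactly as in Step 2 of Theorem~\ref{teo_reg}.

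However, your re-derivation of the $L^\infty$ bound contains a sign error. Testing the approximating equation with $G_k(u_n)\ge 0$ produces
\[
\|G_k(u_n)\|_{BV(\Omega)}=\int_\Omega g(u_n)\,G_k(u_n)\,|Du_n|+\int_\Omega h(u_n)f_n\,G_k(u_n),
\]
so the gradient term sits on the right-hand side, i.e.\ on the side you need as an \emph{upper} bound for $\|G_k(u_n)\|_{BV(\Omega)}$; being nonnegative it cannot be ``dropped'' there, and what survives is not \eqref{stimaGK} but an inequality carrying an uncontrolled extra term. Boundedness in the presence of a first-order reaction term requires a genuine additional argument exploiting \eqref{g} (this is precisely what is done in \cite{gop}); it does not follow from the sign of the term. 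Fortunately this step is redundant in your scheme: the $L^\infty$ bound is already part of the conclusion of \cite[Theorem 5.3]{gop}, which you (and the paper) invoke, so the theorem still follows once the flawed $G_k$ computation is deleted and only the $BV$ upgrade is kept. The same remark applies to your concerns about passing to the limit and identifying the weak limit of $g(u_n)|Du_n|$: since \cite[Theorem 5.3]{gop} already delivers a solution in the sense of \cite[Definition 5.1]{gop}, none of that needs to be redone here.
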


\subsection{Equations with sign}
The result in Section \ref{first} relies on the robustness of our argument and it suggests that the regularity properties proved along the entire paper hold in more general situations. Indeed if one formally deals with equations as 
$$-\Delta_1 u \ge 0,$$
then it is always possible to show that any solution $u\in BV(\Omega)$ provided  $u^{\alpha}\in BV(\Omega)$ for some $\alpha>1$. 
\medskip

In order to be concrete, this property is true when there exists some vector $z\in \mathcal{DM}^\infty(\Omega)$ with $\|z\|_{L^\infty(\Omega)^N} \leq 1$ such that $-\operatorname{div}z \geq 0$ as measures in $\Omega$ and $(z, DT_k(u)) = |DT_k(u)|$ as measures in $\Omega$ for every $k>0$.

\section*{Acknowledgements}
The first author has been funded by Junta de Andaluc\'ia (grant FQM-194),
by the Spanish Ministry of Science and Innovation, Agencia Estatal de Investigaci\'on (AEI)
and Fondo Europeo de Desarrollo Regional (FEDER) (grant PID2021-122122NB-I00) and by
the FPU predoctoral fellowship of the Spanish Ministry of Universities (FPU21/04849).
The second and the third author  have been  partially supported by the Gruppo Nazionale per l’Analisi Matematica, la Probabilità e le loro Applicazioni (GNAMPA) of the Istituto Nazionale di Alta Matematica (INdAM).

\section*{Conflict of interest declaration}

The authors declare no competing interests.

\section*{Data availability statement}
 We do not analyse or generate any datasets, because our work
proceeds within a theoretical and mathematical approach. One
can obtain the relevant materials from the references below.


\begin{thebibliography}{}
	
	\bibitem{acc} F. Alter,  V. Caselles and A.  Chambolle, A characterization of convex calibrable sets in $\R^N$, Math. Ann. 332, 329-366 (2005) 
	
	\bibitem{AFP} L. Ambrosio, N. Fusco and D. Pallara, Functions of Bounded Variation and Free Discontinuity Problems, Oxford Mathematical Monographs, 2000

    \bibitem{ABCM} F. Andreu, C. Ballester, V. Caselles and J. M. Maz\'on, The Dirichlet problem for the total variation flow, J. Funct. Anal. 180 (2) (2001), 347-403  
	
	\bibitem{ACM} F. Andreu, V. Caselles and J. M. Maz\'on,
	Parabolic quasilinear equations minimizing linear growth functionals,Progress in Mathematics, 223, Birkh\"auser Verlag, Basel, 2004

	\bibitem{ADS} F. Andreu, A. Dall'Aglio and S. Segura de Le\'on, Bounded solutions to the 1-Laplacian equation with a critical gradient term,  Asymptot. Anal.  80 (1-2), 21-43 (2012)

	\bibitem{An} G. Anzellotti,  Pairings between measures and bounded functions and compensated compactness, Ann. Mat. Pura Appl. 135 (4),  293-318 (1983)
 
    \bibitem{bop} F. Balducci, F. Oliva and F. Petitta,  Finite energy solutions for nonlinear elliptic equations with competing gradient, singular and $L^1$ terms, J. Differential Equations  391, 334-369 (2024) 

	\bibitem{BGT} E. Barozzi, E. Gonzalez and I. Tamanini, { The mean curvature of a set of finite perimeter}, Proc. Amer. Math. Soc. 99, 313-316 (1987)

	\bibitem{bo} L. Boccardo and L. Orsina, Semilinear elliptic equations with singular nonlinearities, Calc. Var. Partial Differential Equations 37, 363-380 (2010)
 
    \bibitem{cd}  A. Canino and M. Degiovanni, A variational approach to a class of singular semilinear elliptic equations. J. Convex Anal., 11 (1), 147-162  (2004)
    
    \bibitem{cs}   A. Canino and B. Sciunzi, A uniqueness result for some singular semilinear elliptic equations, Communications in Contemporary Mathematics,  18, No. 06, 1550084 (2016)
    
	\bibitem{C} V. Caselles, On the entropy conditions for some flux limited diffusion equations, J. Differential Equations 250,  3311-3348 (2011)
	
	\bibitem{CF} G.Q. Chen and H. Frid, Divergence-measure fields and hyperbolic conservation laws, Arch. Ration. Mech. Anal.  147 (2), 89-118 (1999)
 
	\bibitem{ct} M. Cicalese and C. Trombetti, Asymptotic behaviour of solutions to {$p$}-{L}aplacian equation, Asympt. Anal. 35 (1), 27-40 (2003) 

	\bibitem{crt} M.G. Crandall, P.H. Rabinowitz and L. Tartar,  On a dirichlet problem with a singular nonlinearity, Comm. Partial Diff. Equations 2, 193-222 (1977)
    
    \bibitem{CD} G. Crasta and V. De Cicco, Anzellotti's pairing theory and the Gauss-Green theorem, Adv. Math. 343, 935–970 (2019)

	\bibitem{DCA} L.M.  De Cave, Nonlinear elliptic equations with singular nonlinearities, Asymptot. Anal. 84 (3-4), 181-195, (2013)

	\bibitem{DDO} L.M. De Cave, R. Durastanti and F. Oliva, Existence and uniqueness results for possibly singular nonlinear elliptic equations with measure data, Nonlinear Differ. Equ. Appl. 25:18 (2018) 

	\bibitem{DGS}  V. De Cicco, D. Giachetti and S. Segura de Le\'on, Elliptic problems involving the 1-Laplacian and a singular lower order term,  J. London Math. Soc. 2, 1-28 (2018)
    
    \bibitem{DGOP} V. De Cicco, D. Giachetti, F. Oliva and  F. Petitta, The Dirichlet problem for singular elliptic equations with general nonlinearities, Calc. Var. Partial Differential Equations, 58 (4), Art. 129 (2019)
    
  	\bibitem{gop}  D. Giachetti, F. Oliva and F. Petitta, $1$-Laplacian Type Problems With Strongly Singular Nonlinearities And Gradient Terms, Commun. Contemp. Math., 24 (10), Art. 2150081 (2022)
    
    \bibitem{lops} M. Latorre, F. Oliva, F. Petitta and S. Segura de Le\'on, The Dirichlet problem for the 1-Laplacian with a general singular term and $L^1$-data, Nonlinearity 34, 1791-1816 (2021)
    
    \bibitem{lm} A.C. Lazer and P.J. McKenna, On a singular nonlinear elliptic boundary-value problem, Proc. Amer. Math. Soc. 111, 721-730 (1991)
    
    \bibitem{mst} A. Mercaldo,  S. Segura de Le\'on and C. Trombetti, On the behaviour of the solutions to $p$-Laplacian equations as $p$ goes to 1, Publ. Mat. 52 (2), 377-411 (2008)
    
    \bibitem{MST2009} A. Mercaldo,  S. Segura de Le\'on and C. Trombetti, On the solutions to 1-{L}aplacian equation with {$L^1$} data, J. Funct. Anal. 256 (8), 2387-2416 (2009)
    
    \bibitem{MP} S. Moll and F. Petitta, Large solutions for the elliptic 1-laplacian with absorption, J. Anal. Math. 125 (1),  113-138 (2015) 
    
    \bibitem{o} F. Oliva, Regularizing effect of absorption terms in singular problems, Journal of mathematical analysis and applications 472 (1), 1136-1166 (2019)
    
    \bibitem{OP} F. Oliva and F. Petitta, Finite and infinite energy solutions of singular elliptic problems: existence and uniqueness, J. Differential Equations 264 (1), 311-340 (2018)
    
    \bibitem{opSU} F. Oliva and F. Petitta, Singular Elliptic PDEs: an extensive overview, Partial Differential Equations and Applications.  6 (6),  (2025)
    \bibitem{OCP} J. C. Ortiz Chata, F. Petitta, Existence, non-existence and degeneracy of limit solutions to $p$-Laplace problems involving Hardy potentials as $p\to1^+$, SIAM J. Math. Anal.,  56  (6),  7014-7042 (2024) 
    
    \bibitem{PKF}  L. Pick, A. Kufner,  J. Oldrich and  S. Fuc\'ik,  {Function Spaces, 1}. Berlin, Boston: De Gruyter, 2012
    
    \bibitem{S} G. Singh, Weak solutions for singular quasilinear elliptic systems, Complex Var. Elliptic Equ. 61 (10), 1389-1408 (2016)

	\bibitem{talenti} G. Talenti, Best constant in Sobolev inequality, Ann. Mat. Pura Appl. 110, 353-372 (1976) 
\end{thebibliography}
\end{document}